\numberwithin{equation}{section}
\newcounter{dummy}
\newcommand\myitem[1][]{\item[#1]\refstepcounter{dummy}\def\@currentlabel{#1}}
\newtheorem{thm}{Theorem}
\numberwithin{thm}{section}
\newtheorem{lem}[thm]{Lemma}
\newtheorem{prop}[thm]{Proposition}
\newtheorem{defi}[thm]{Definition}
\newtheorem{coro}[thm]{Corollary}
\newtheorem{thmintro}{Theorem}
\newtheorem*{thm*}{Theorem}
\newtheorem*{prop*}{Proposition}
\numberwithin{equation}{section}
\theoremstyle{remark}
\newtheorem{rem}[thm]{Remark}
\newcommand{\A}{\mathcal{A}}
\newcommand{\B}{\mathcal{B}}
\newcommand{\Bop}{\mathbb{B}}
\newcommand{\V}{\vert}
\newcommand{\norm}{\Vert}
\newcommand{\R}{\mathbb{R}}
\newcommand{\N}{\mathbb{N}}
\newcommand{\Z}{\mathbb{Z}}
\newcommand{\D}{\mathcal{D}}
\newcommand{\Sphere}{\mathbb{S}}
\newcommand{\back}{\backslash}
\newcommand{\dx}{\,\textup{d}x}
\newcommand{\dt}{\,\textup{d}t}
\newcommand{\dy}{\,\textup{d}y}
\renewcommand{\phi}{\varphi}
\newcommand{\av}{\#}
\newcommand{\weakto}{\rightharpoonup}
\newcommand{\weakstar}{\overset{\ast}{\rightharpoonup}}
\DeclareMathOperator{\Lin}{Lin}
\DeclareMathOperator{\Image}{Im}
\DeclareMathOperator{\curl}{curl}
\DeclareMathOperator{\spann}{span}
\DeclareMathOperator{\divergence}{div}
\DeclareMathOperator{\sgn}{sgn}
\DeclareMathOperator{\loc}{loc}
\begin{document}
\title{A sufficient and necessary condition for $\mathcal{A}$-quasiaffinity}
\author[Schiffer]{Stefan Schiffer}
\address{Insitute for Applied Mathematics, University of Bonn} 
\email{schiffer@iam.uni-bonn.de}
\subjclass[2010]{49J45,35E20}
 \keywords{$\mathcal{A}$-quasiconvexity, Null-Lagrangians, constant rank operators, weak continuity} 
\begin{abstract}
We consider a homogeneous, constant rank differential operator $\mathcal{A}$ and prove a characterisation theorem for $\mathcal{A}$-quasiaffine functions in the spirit of Ball, Currie and Olver \cite{BCO}; i.e. functions such that \[
f(v) = \int_{T_N} f(v + \psi(y))~\mathrm{d}y
\]
for all $v$ and all $\mathcal{A}$-free test functions $\psi$ with zero mean. This result is used to get a sufficient, but not necessary condition for the differential operator $\mathcal{A}$, such that linearity along the characteristic cone of $\mathcal{A}$ implies $\mathcal{A}$-quasiaffinity. We show that this implication is true if $\mathcal{A}$ admits a first order potential.
\end{abstract}
\maketitle
\section{Introduction} \label{sec:intro}
\subsection{$\mathcal{A}$-quasiaffine functions}
Let us consider a homogeneous differential operator $\A \colon C^{\infty}(\R^N,\R^d) \to C^{\infty}(\R^N,\R^l)$ of order $k_{\A}$, given by \begin{equation} \label{intro:Adef}
\A u = \sum_{\V \alpha \V = k_{\alpha}} A_{\alpha} \partial_{\alpha} u,\end{equation}
where $A_{\alpha} \in \Lin(\R^d,\R^l)$ are constant coefficients. We call $f \colon \R^d \to \R$ \textbf{$\A$-quasiaffine} if for all $\A$-free test functions on the $N$-torus, i.e. $\psi \in C^{\infty}(T_N,\R^d)$ with $\A \psi=0$ and $\int_{T_N} \psi=0$, and all $v \in \R^d$ \begin{equation} \label{intro:AqAdef}
f(v) = \int_{T_N} f(v + \psi(y)) \dy.
\end{equation}
In this work, we prove a sufficient and necessary condition for a function $f$ to be  $\A$-quasiaffine depending on derivatives of $f$.

Let us shortly recall, that for differential operators as in \eqref{intro:Adef} and $\xi \in \R^N \back \{0\}$, the Fourier symbol $\A[\xi]$ of $\A$ is defined as \[
\A [\xi] = \sum_{\V \alpha \V =k_{\alpha}} A_{\alpha} \xi^{\alpha} \in \Lin(\R^d,\R^l)
\]
We assume that $\A$ satisfies the following two conditions: \begin{enumerate}[label=(\alph*)]
    \item \textbf{Constant rank property:} The rank of the linear operator $\A[\xi]$ is constant, i.e. there is $r \in \N$, such that  for all $\xi \in \R^N \back \{0\}$ \[
    \dim \ker \A[\xi] = r; \]
    \item \textbf{Spanning property: } \[ \mathrm{span} \left\{\bigcup_{\xi \in \R^N \back \{0\}} \ker \A[\xi] \right\} =\R^d. \]
\end{enumerate}
Recently, \textsc{Rai\c{t}\u{a}} proved another characterisation of constant rank operators \cite{Raita}. Namely $\A$ has constant rank if and only if it admits a \textit{potential} $\B \colon C^{\infty}(\R^N,\R^m) \to C^{\infty}(\R^N,\R^d)$, i.e. a differential operator $\B$, such that its Fourier symbol $\B[\xi]$ satisfies \[
\Image \B [\xi]= \ker \A [\xi]
\]
for all $\xi \in \R^N \back \{0\}$. Using this result, we are able to give several equivalent conditions of what it means to be an $\A$-quasiaffine function.
\begin{thmintro} \label{thm:A}
Let $f \colon \R^d \to \R$ and let $\A$ satisfy the constant rank property and the spanning property. Moreover, let $\B$ be a potential of $\A$. Then the following statements are equivalent. \begin{enumerate}[label=(\alph*)]
\item \label{thmA:1} $f$ is $\A$-quasiaffine;
\item \label{thmA:2}$f$ is a polynomial and $\forall x \in \R^d$, $\forall  r \geq 2$, $\forall \xi_1,...,\xi_r \in \R^d$ which are linear dependent and $ \forall v_1,...,v_r \in \R^d$ with $v_i \in \ker \A[\xi_i]$ we have \begin{equation} \label{Aqax}
D^r f (x) [v_1,...,v_r] =0;
\end{equation}
\end{enumerate}
Moreover, we have the following equivalent conditions (cf. \cite{RG})
\begin{enumerate}[label=(\alph*)] \setcounter{enumi}{2}
\item \label{thmA:3}$f$ is $C^1$ and the Euler-Lagrange equation \begin{equation} \label{EL:intro}
\B ^T (\nabla f (\B u)) = 0
\end{equation}
is satisfied in the sense of distributions $\forall u \in C^{k_{\B}}(\bar{\Omega})$, i.e. for all $\phi \in C_c^{\infty}(\Omega,\R^m)$ we have \[
\int_{\Omega} \nabla f (\B u) \cdot \B \phi =0;
\]
\item \label{thmA:4}The map $u \to f(u)$ is sequentially weak$*$ continuous from $L^{\infty}(\Omega,\R^d) \cap \ker \A$ to $L^{\infty}(\Omega,\R^d)$, i.e. if $u_n \in L^{\infty}(\Omega,\R^d)$ with $\A u_n=0 $ and $u_n \weakstar u$ in $L^{\infty}(\Omega,\R^d)$, then also $f(u_n) \weakstar f(u)$ in $L^{\infty}(\Omega,\R^d)$;
\item \label{thmA:5}$f$ is a polynomial of degree $s \leq d$ and the map $u \to f(u)$ is sequentially weakly continuous from $L^s(\Omega,\R^d)$ to $\D'(\Omega)$ (the space of distributions on $\Omega$), i.e. if $u_n \in L^{s}(\Omega,\R^d)$ with $\A u_n=0 $ and $u_n \weakto u$ in $L^{s}(\Omega,\R^d)$, then  \[
\lim_{n \to \infty} \int_{\Omega} f(u_n) \phi= \int_{\Omega} f(u) \phi \quad \forall \phi \in C_c^{\infty}(\Omega).
\]
\end{enumerate}
\end{thmintro}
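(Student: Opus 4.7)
The plan is to organise the argument as a cycle of implications, with the bulk of the work in the new equivalence $(\ref{thmA:1}) \Leftrightarrow (\ref{thmA:2})$; the chain $(\ref{thmA:1}) \Leftrightarrow (\ref{thmA:3}) \Leftrightarrow (\ref{thmA:4}) \Leftrightarrow (\ref{thmA:5})$ largely follows from Fonseca--M\"uller-type weak continuity theory for constant rank operators, as formulated in \cite{Raita,RG}, together with the null-Lagrangian viewpoint made available by the potential $\mathcal{B}$.

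I would begin with the easier implication $(\ref{thmA:2}) \Rightarrow (\ref{thmA:1})$, which is a direct Fourier-analytic calculation on the torus. An $\mathcal{A}$-free test function $\psi \in C^\infty(T_N,\R^d)$ with zero mean has Fourier series $\psi(y) = \sum_{k \in \Z^N \setminus \{0\}} \hat\psi(k)\, e^{2\pi i k \cdot y}$ with $\hat\psi(k) \in \ker \mathcal{A}[k] \otimes \C$. Taylor-expanding the polynomial $f$ about $v$ and integrating over $T_N$, orthogonality of characters collapses the integral to
\begin{equation*}
\int_{T_N} f(v+\psi(y))\, \mathrm{d}y = f(v) + \sum_{r \geq 1} \frac{1}{r!}\sum_{k_1 + \cdots + k_r = 0} D^r f(v)\bigl[\hat\psi(k_1),\ldots,\hat\psi(k_r)\bigr].
\end{equation*}
Every tuple $(k_1,\ldots,k_r)$ summing to zero is linearly dependent over $\R$ and satisfies $\hat\psi(k_i)\in \ker\mathcal{A}[k_i]$, so assumption (\ref{Aqax}) kills every higher-order term.

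For the reverse direction $(\ref{thmA:1}) \Rightarrow (\ref{thmA:2})$, I would proceed in two stages. First, to show $f$ is polynomial, I would test (\ref{intro:AqAdef}) with $\psi(y)=\cos(2\pi k\cdot y)\,v_0+\sin(2\pi k\cdot y)\,w_0$ (which is $\mathcal{A}$-free whenever $v_0,w_0\in\ker\mathcal{A}[k]$, guaranteed by the spanning property), then run the moment/Jensen argument of Ball--Currie--Olver \cite{BCO} to force polynomial growth of $f$, hence polynomiality. Second, to extract (\ref{Aqax}), I would fix integer frequencies $k_1,\ldots,k_r \in \Z^N$ with $k_1+\cdots+k_r=0$ (automatically a linearly dependent tuple) and vectors $v_i\in\ker\mathcal{A}[k_i]$, and consider the $r$-parameter family $\psi_t(y)=\sum_{i=1}^r t_i\,\mathrm{Re}\bigl(v_i e^{2\pi i k_i\cdot y}\bigr)$. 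Plugging this into (\ref{intro:AqAdef}) produces a polynomial identity in $(t_1,\ldots,t_r)$ whose coefficient of $t_1\cdots t_r$ is, up to a nonzero symmetric factor, the multilinear expression $D^r f(v)[v_1,\ldots,v_r]$, forcing it to vanish. Polarisation handles the general multilinear statement, and it remains to pass from rational frequencies to arbitrary linearly dependent $\xi_i\in\R^N$.

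The main obstacle I anticipate is exactly this last step. For a generic linearly dependent tuple $\xi_1,\ldots,\xi_r \in \R^N$ with $v_i \in \ker \mathcal{A}[\xi_i]$, rational approximation by $k_i \in \Z^N$ with $\sum k_i = 0$ perturbs the kernels and moves the $v_i$ off them. Here the potential $\mathcal{B}$ from \cite{Raita} is the key tool: writing $v_i = \mathcal{B}[\xi_i]w_i$ for fixed $w_i \in \R^m$ and letting $\xi_i$ vary continuously produces a continuous family $\mathcal{B}[k_i]w_i \in \ker\mathcal{A}[k_i]$, so the multilinear identity passes to the limit by continuity of $D^r f$ and homogeneity of $\mathcal{B}[\cdot]$. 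With $(\ref{thmA:1}) \Leftrightarrow (\ref{thmA:2})$ in hand, the remaining equivalences follow by reading $f\circ\mathcal{B}$ as a null Lagrangian: its vanishing first variation is precisely (\ref{thmA:3}), stability under oscillating test fields built from $\mathcal{B}$-potentials gives the $L^\infty$ weak-$*$ continuity (\ref{thmA:4}) via the constant-rank Fonseca--M\"uller projection, and a degree count together with H\"older truncation to $L^s$-integrable sequences (controlled by the polynomial degree $s\le d$) yields (\ref{thmA:5}).
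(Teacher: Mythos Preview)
Your proposal is essentially correct and follows the same route as the paper: the direction \ref{thmA:2}$\Rightarrow$\ref{thmA:1} is a Taylor expansion plus Fourier orthogonality (the paper phrases this via Plancherel and the discrete convolution identity, but it is the same computation), and \ref{thmA:1}$\Rightarrow$\ref{thmA:2} is obtained by plugging trigonometric test functions $\sum t_i w_i\cos(2\pi\xi_i\cdot y)$ (resp.\ $\sin$) into the quasiaffinity identity and reading off the coefficient of $t_1\cdots t_r$. The equivalences with \ref{thmA:3}--\ref{thmA:5} are handled in the paper, as in your sketch, by the first-variation computation and by quoting the Fonseca--M\"uller/Guerra--Rai\c{t}\u{a} weak-continuity theory.

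Two organisational points are worth tightening. First, for polynomiality the paper does not run a BCO moment argument; it observes that $\A$-quasiaffinity of $\pm f$ forces $\Lambda_{\A}$-affinity, and then the spanning property immediately gives that $f$ is a polynomial of degree $\le d$ in coordinates adapted to a basis contained in $\Lambda_{\A}$. This is shorter than what you sketch. Second, the paper structures \ref{thmA:1}$\Rightarrow$\ref{thmA:2} as an \emph{induction on $r$}: if already some proper subfamily $\xi_{i_1},\dots,\xi_{i_s}$ is linearly dependent, one differentiates the lower-order identity; otherwise one reduces (by homogeneity of $\B[\cdot]$) to the normalised case $\xi_r=\xi_1+\cdots+\xi_{r-1}$ with $\xi_1,\dots,\xi_{r-1}$ independent, and only then uses the trigonometric test function. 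Your direct approach with tuples satisfying $\sum k_i=0$ covers precisely this normalised case; to reach an arbitrary linearly dependent tuple you still need to rescale each $\xi_i$ by the coefficient in the dependence relation (which leaves $\ker\A[\xi_i]$ unchanged) \emph{and} invoke the inductive step when some of those coefficients vanish. Your remark that ``polarisation handles the general multilinear statement'' does not do this job---polarisation moves between $D^rf(x)[v,\dots,v]$ and $D^rf(x)[v_1,\dots,v_r]$ for vectors in the \emph{same} subspace, not between different kernels---so you should replace it by the induction just described. Your treatment of the integer-to-real passage via $v_i=\B[\xi_i]w_i$ and polynomial density is correct and in fact makes explicit a point the paper leaves implicit.
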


We prove that condition \ref{thmA:2} can in fact be weakened, indeed we may only consider $ 2 \leq r \leq \min \{ N,k_{\B} \}+1$ instead of $r \geq 2$ (cf. Theorem \ref{main:2}). This can be used to show the following result.

\begin{thmintro} \label{thm:B}
Let $\A$ be a constant rank operator satisfying the spanning property and let $\B$ be a potential of $\A$. Suppose that $\B$ is of order one. Then $f \colon \R^d \to \R$ is $\A$-quasiaffine if and only if $f$ is $\Lambda_{\A}$-affine, i.e. for all $v \in \R^d$ and $w \in \Lambda_{\A} = \cup_{\xi \in \R^N \back \{0\}} \ker \A[\xi]$ we have that \[
 t \longmapsto f(v +tw) \quad \text{is affine.}
\]
\end{thmintro}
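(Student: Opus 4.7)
The ``only if'' direction follows at once from item \ref{thmA:2} of Theorem~\ref{thm:A}. Applied with $r=2$ and $\xi_1=\xi_2=\xi$, which are trivially linearly dependent, it gives $D^2 f(x)[w,w]=0$ for every $x \in \R^d$ and every $w \in \ker \A[\xi]$. Varying $\xi$ over $\R^N \setminus \{0\}$, this is exactly the statement that $f$ is $\Lambda_\A$-affine.

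For the converse, the plan is to invoke the strengthened version of Theorem~\ref{thm:A}, namely Theorem~\ref{main:2}, in which the range of $r$ in condition \ref{thmA:2} is shrunk to $2 \leq r \leq \min\{N, k_\B\}+1$. Under the hypothesis $k_\B=1$ this collapses to the single case $r=2$, so it suffices to verify (i) that $f$ is a polynomial and (ii) the derivative identity \eqref{Aqax} for $r=2$.

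For (ii), note that two nonzero $\xi_1, \xi_2 \in \R^N$ are linearly dependent exactly when $\xi_2 = c\,\xi_1$ for some $c \neq 0$; homogeneity of the symbol then forces $\ker \A[\xi_2] = \ker \A[\xi_1] =: V$, a linear subspace containing both $v_1$ and $v_2$. Since $V \subseteq \Lambda_\A$, the $\Lambda_\A$-affinity assumption yields $D^2 f(x)[w,w]=0$ for every $w \in V$, and the polarization identity
\begin{equation*}
2\, D^2 f(x)[v_1,v_2] = D^2 f(x)[v_1+v_2, v_1+v_2] - D^2 f(x)[v_1,v_1] - D^2 f(x)[v_2,v_2],
\end{equation*}
combined with $v_1+v_2 \in V$, closes this step.

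The genuine obstacle is step (i): upgrading a pointwise cone-direction affinity to global polynomial structure (needed as an input to Theorem~\ref{main:2}). The intended strategy is to use the spanning property to select a basis $e_1,\ldots,e_d \in \Lambda_\A$ along which $f$ is separately affine, and then to exploit the algebraic richness of $\Lambda_\A$ in the first-order-potential case (namely $\Lambda_\A$ is the image of the bilinear map $(\xi, z) \mapsto \B[\xi]z$) to handle cross terms via iterated difference quotients in cone directions. This should force $f$ to be a polynomial of degree at most $d$, at which point step (ii) together with Theorem~\ref{main:2} concludes that $f$ is $\A$-quasiaffine.
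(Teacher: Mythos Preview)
Your approach is correct and essentially the paper's: deduce Theorem~\ref{thm:B} from Theorem~\ref{main:2} by noting that for $k_\B=1$ only the case $r=2$ survives, and that this condition is (via polarization, exactly as you write) equivalent to $\Lambda_\A$-affinity. The one place where you diverge is step~(i), which you flag as ``the genuine obstacle'' and propose to attack using the first-order structure of $\B$ together with a cross-term analysis via iterated difference quotients. In fact this step is already Theorem~\ref{prop:polynomial}\ref{prop:poly4} in the paper, requires only the spanning property, and is entirely independent of $k_\B$: once a basis $v_1,\dots,v_d \in \Lambda_\A$ is chosen, separate affinity of $f$ in each of the corresponding coordinates $\lambda_1,\dots,\lambda_d$ immediately forces $f$ to be a polynomial of degree at most $d$, by a short induction on $d$ (write $f(\lambda_1,\dots,\lambda_d) = a(\lambda_2,\dots,\lambda_d) + b(\lambda_2,\dots,\lambda_d)\,\lambda_1$ and apply the inductive hypothesis to $a$ and $b$, which are again separately affine). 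No cross-term or difference-quotient work is needed, and the first-order hypothesis on $\B$ plays no role in this step; it enters only in collapsing the range of $r$ in Theorem~\ref{main:2} to $r=2$. With this observation, your proof is complete and matches the paper's.
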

\subsection{Quasiaffinity and Null-Lagrangians}
Let us give a few historical remarks, before finishing the introduction with a few applications of $\A$-quasiaffinity.
The study of $\A$-quasiaffine functions started with the operator $\A = \curl$, i.e. with the potential operator $\B =\nabla \colon C^{\infty}(\R^N,\R^m) \to C^{\infty}(\R^N,\R^{N \times m})$. These $\curl$-quasiaffine functions are also often called \textit{Null-Lagrangians}. In this setting, it is a well-known result, that all $\curl$-quasiaffine functions are linear combinations of minors of $N \times m$-matrices (e.g. \cite{Mor66,Resh,Conti,Dac}). \textsc{Ball}, \textsc{Currie} and \textsc{Olver} considered the potential operator of higher gradients $\B = \nabla^k$ and showed an analogue of Theorem \ref{thm:A} in this special setting \cite{BCO}. In particular, their work provides us with an example showing that the statement of Theorem \ref{thm:B} fails in the case, where the potential $\B$ has degree larger than one. We shall mention again that the equivalences \ref{thmA:3}- \ref{thmA:5} in Theorem \ref{thm:A} are treated in the general setting of constant rank operators $\A$ by \textsc{Guerra} and \textsc{Rai\c{t}\u{a}} in \cite{RG}. In particular \ref{thmA:4} and \ref{thmA:5} are examined in great detail.

\subsection{$\A$-quasiaffine functions and minimisation problems}
Let us outline a set of problems, where $\A$-quasiaffine functions play an important role.  Consider the functional $I \colon L^p(\Omega,\R^d) \to [0,\infty)$,
\begin{equation} \label{intro:func}
I(u) = \left\{ \begin{array}{ll}
     \int_{\Omega} f( u(y)) \dy &\text{if }\A u=0, \\
     \infty & \text{else,}  
\end{array} \right.
\end{equation}
A powerful tool to show that $I$ posseses a minimiser is the \textit{Direct Method}. To get this method to work we need to show the following: \begin{enumerate}[label=(\arabic*)]

    \item $I$ is bounded from below and there exists $u$, such that $I(u) < \infty$;
    \item \label{prop2}$I$ is weakly lower semi-continuous, i.e. if $u_n \weakto u$, then $I(u) \leq \liminf_{n \to \infty} I(u_n)$;
    \item \label{prop3}$I$ is \textit{coercive}, i.e. for every $C>0$ there is $R>0$, such that $\norm u \norm_{L^p} \geq C.$
\end{enumerate}
$\A$-quasiaffinity comes into play for \ref{prop2} and \ref{prop3}. On the one hand, we note that if $f$ is $\A$-quasiaffine, the functional $I$ in \eqref{intro:func} is already weakly continuous. Weak lower-semicontinuity of $I$ is equivalent to the notion of $A$\textit{-quasiconvexity} \cite{FM}, where we relax \eqref{intro:AqAdef} to \[
f(v) \leq \int_{T_N} f(v + \psi(y)) \dy
\]
for all suitable test functions. It is, however, not so easy to show that a given function $f$ is $\A$-quasiconvex. Therefore, one often studies functions of the form $g(f(v))$, where $f \colon \R^d \to \R^e$ is a component-wise $\A$-quasiaffine function and $g \colon \R^e \to \R$ is convex, which is, in the setting $\A= \curl$, referred to as \textit{polyconvexity} (e.g. \cite{Alibert,Ball77}). A short calculation shows that any function of the form $g(f(\cdot))$ for $\A$-quasiaffine $f$ and convex $g$ is already $\A$-quasiconvex.

On the other hand, $\A$-quasiaffine functions also can be used to consider non-standard coercivity conditions. Usually, we may just assume that $f(v) \geq C_1 \V v \V^p-C_2$. Given some further restrictions on the problem, we may use a coercivity condition using $\A$-quasiaffine functions. As an example, consider $\Omega \subset \R^N$ open and bounded, $J \colon W^{1,N}(\Omega,\R^N)$, $u_0 \in W^{1,N}(\Omega,\R^N)$, $f \colon \R^{N \times N} \to [0,\infty)$ and
\begin{equation} \label{intro:func2}
J(u) = \left\{ \begin{array}{ll}
     \int_{\Omega} f( \nabla u(y)) \dy &\text{if } u - u_0 \in W^{1,N}_0(\Omega,\R^N)  \\
     \infty & \text{else,}  
\end{array} \right.
\end{equation}
Then the (non-standard) coercivity condition including the quasiaffine function $\det$ \[
f(A) \geq C \V A \V^N - C_2 - \textup{det}(A) \quad \text{for all } A \in \R^{N \times N}
\]
guarantees, that minimising sequences of $I$ are bounded in  $W^{1,N}(\Omega,\R^d)$. Moreover, if $I$ is weakly lower-semicontinuous, this growth condition then implies existence of minimisers.
\subsection{Compensated Compactness}
As one might expect in view of Theorem \ref{thm:A} \ref{thmA:4} and \ref{thmA:5}, the notion of $\A$-quasiaffinity plays a crucial role in the theory of compensated compactness (e.g. \cite{Murat2,DiPerna,Rindler,RG}).
In particular, the classical $\divergence$-$\curl$ Lemma saying that if $u_n,v_n \in L^2(\Omega,\R^d)$ satisfy $\divergence u_n=0$ and $\curl v_n=0$, then\[
u_n \weakto u,~v_n \weakto v \text{ in } L^2 \Longrightarrow u_n \cdot v_n \weakto u \cdot v \text{ in the sense of distributions}
\]
can be seen from the fact that $u \cdot v$ is a $\divergence$-$\curl$ quasiaffine function (cf. Section \ref{sec:divcurl}, \cite{Murat2,Compcomp,Murat}) .

\subsection{Outline} This paper is organised as follows. In Section \ref{sec:conv}, we recall basic facts about constant rank operators, their potentials and $\A$-quasiconvexity. We deal with $\A$-quasiaffine functions and prove Theorem \ref{thm:A} and Theorem \ref{thm:B} in Section \ref{sec:affin}. Section \ref{sec:examples} presents a short coverage of examples of $\A$-quasiaffine functions for some well-known operators.

\section{$\A$-quasiconvex functions} \label{sec:conv}
\subsection{Notation}
Denote by $e_1,...,e_N$ the standard basis of $\R^N$.
For $N \in \N$ let us define the $N$-torus as $[0,1]^N$ with the usual identifications of faces. We may identify functions $u \in W^{k,p}(T_N,\R^d)$ with $\Z^N$-periodic functions in $W^{k,p}_{\loc}(\R^N,\R^d)$.
For $u \in W^{1,p}(\Omega,\R^d)$, we write \[
\partial_j u = \frac{\partial}{\partial x_j} u
\]
to denote partial derivatives. With $\nabla u \in L^p(\Omega,\R^{N \times d})$ we denote the gradient of $u$, a matrix consisting of the entries $\partial_j u_i$ and, likewise, $\nabla^r$ is the $r$-th gradient. In contrast to this, for $f \colon \R^d \to \R$,  we denote by $D^r f$ the $r$-th derivative seen as a multilinear map from $(\R^d)^r$ to $\R$.

\noindent For a multiindex $\alpha=(\alpha_1,...,\alpha_N)$ and for $\lambda=\sum_{i=1}^N \lambda_i e_i \in \R^N$ write  \[
\V \alpha \V := \sum_{i=1}^N \alpha_i,\quad \lambda^{\alpha}= \prod_{i=1}^N \lambda_i^{\alpha_i}.
\]
For $u \in C^{\V \alpha \V}(\R^N,\R^d)$ we write \[
\partial_{\alpha} u = \partial_{1}^{\alpha_1}...\partial_{n}^{\alpha_n} u.
\]
 For a function space $X \subset L^1(T_N,\R^d)$, we define  \[
 X_{\av} = \{ u \in X \colon \int_{T_N} u =0 \} .\]

\subsection{Constant Rank Operators}
Consider a differential operator $\A \colon C^{\infty}(\R^N,\R^d) \to C^{\infty}(\R^N,\R^l)$ with constant coefficients  given by \begin{equation}
    \label{A1}
    \A u = \sum_{\V \alpha \V =k_{\A}} A_{\alpha} \partial_{\alpha} u,
\end{equation}
where $A_{\alpha} \in \Lin (\R^d,\R^l)$. We denote by $\A^{\ast}$ the adjoint operator of $\A$, \\i.e. $\A^{\ast} \colon C^{\infty}(\R^N,\R^l) \to C^{\infty}(\R^N,\R^d)$ and 
\[
\A^{\ast} v = (-1)^{k_{\A}}\sum_{\V \alpha \V =k_{\A}} A_{\alpha}^T \partial_{\alpha} u.
\]
We say that $u \in L^p(\R^N,\R^d) \cap \ker \A$, if $u \in L^p(\R_N,\R^d)$ and for all $v \in C^{\infty}_c(\R^N,\R^d)$ \[
\int_{\R^N} u \cdot \A^{\ast} v =0.
\]
Likewise, we can also define what it means for $u \in L^p_{\loc}(\R^N,\R^d)$ to be in $\ker \A$ and hence also for $u \in L^p(T_N,\R^d)$. Let us recall some basic notions for the operator $\A$ (c.f. \cite{Murat2,Murat,Compcomp}).

\begin{defi} Let $\A$ be a differential operator as in \eqref{A1}. \begin{enumerate}[label=(\alph*)]
\item The Fourier symbol of the differential operator $\A$ is the map $\R^N \back \{0\} \to \Lin(\R^d,\R^l)$ defined by \[
\A [\xi] = \sum_{\V \alpha \V=k_{\A}} \xi^{\alpha}A_{\alpha};
\] 
\item $\A$ satisfies the \textbf{constant rank property}  if there exists an $r \in \{0,...,N\}$ such that \[
 \dim \ker \A [\xi] = r;
\]
\item The \textbf{characteristic cone} $\Lambda_{\A} \subset \R^d$ of $\A$ is defined by \[
\Lambda_{\A} := \bigcup_{\xi \in \R^N \back \{0\}} \ker \A[\xi].
\]
\item $\A$ satisfies the \textbf{spanning property}, if the characteristic cone $\Lambda_{A}$ spans up $\R^d$.

\end{enumerate}
\end{defi}
In adittion, we also consider a differential operator $\B \colon C^{\infty}(\R^N,\R^m) \to C^{\infty}(\R^N,\R^d)$ of order $k_{\B}$, given by \[
\B u = \sum_{\V \alpha \V = k_{\B}} B_{\alpha} \partial_{\alpha} u.
\]
Likewise, we define $\B^{\ast}$ and the cone $\Lambda_{\B}$ for $\B$.

\begin{defi} We call a differential operator $\B$ the potential of $\A$ if $\forall \xi \in \R^N \back \{0\}$ we have $\Image \B [\xi] = \ker \A [\xi]$.
\end{defi}

\begin{rem} As it was pointed out in \cite{RG}, the potential $\B$ is not unique, even if we fix the order $k_{\B}$ of $\B$ and identify operators via homeomorphisms of the underlying space $\R^m$. Moreover, note that if $\B \colon C^{\infty}(\R^N,\R^m) \to C^{\infty}(\R^N,\R^d)$ is a potential of $\A$, then also \[
\B \circ \divergence \colon C^{\infty}(\R^N,\R^{m} \otimes \R^N) \to C^{\infty}(\R^N,\R^d) 
\]
is a potential of $\A$.
\end{rem}
\begin{prop} \label{prop:potentials}
Let $\A$ be a constant rank operator. The following statements are equivalent: \begin{enumerate}[label=(\alph*)]
			\item \label{prop:1}$\B$ is the potential of $\A$;
            \item \label{prop:exact} The following two properties hold \begin{enumerate}
            \myitem[(b1)]  \label{prop:2} $\forall u \in L^2_{\av}(T_N,\R^d) \cap \ker \A$ there exists $v \in W^{k_{\B},2}(T_N,\R^m)$ such that $\B v =u$;
            \myitem[(b2)] \label{prop:3} $\forall v \in C^{\infty}(T_N,\R^m)$ we have $\A (\B v) =0$;
            \end{enumerate}
        \item \label{prop:adjoint} $\A^{\ast}$ is a potential of $\B^{\ast}$.
        \end{enumerate}
\end{prop}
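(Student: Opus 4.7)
The plan is to prove (a) $\Leftrightarrow$ (c) as a pointwise statement about Fourier symbols, and to close the circle (a) $\Rightarrow$ (b) $\Rightarrow$ (a) via Fourier analysis on $T_N$.

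For (a) $\Leftrightarrow$ (c) I would first observe that the Fourier symbol of $\A^{\ast}$ is, up to the sign $(-1)^{k_{\A}}$, the transpose $\A[\xi]^T$, and similarly for $\B^{\ast}$. The classical linear-algebra identities $(\Image M)^{\perp} = \ker M^T$ and $(\ker M)^{\perp} = \Image M^T$ then convert $\Image \B[\xi] = \ker \A[\xi]$, after taking orthogonal complements in $\R^d$, into $\ker \B[\xi]^T = \Image \A[\xi]^T$, which is precisely the potential condition for the pair $(\A^{\ast}, \B^{\ast})$. So this equivalence reduces to a pointwise exercise on $\R^N \setminus \{0\}$.

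For (a) $\Rightarrow$ (b), property \ref{prop:3} is immediate because $\A[\xi]\B[\xi] = 0$ as linear maps, and \ref{prop:2} is a Fourier-series computation. Expanding $u \in L^2_{\av}(T_N,\R^d) \cap \ker \A$ as $u(x) = \sum_{k \in \Z^N \setminus \{0\}} \hat u(k)\, e^{2\pi i k \cdot x}$, the hypothesis $\A u = 0$ forces $\hat u(k) \in \ker \A[k] = \Image \B[k]$, so I would invert via $\hat v(k) = (2\pi i)^{-k_{\B}} \B[k]^{\dagger} \hat u(k)$, with $\B[k]^{\dagger}$ the Moore--Penrose pseudoinverse. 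The required bound $v \in W^{k_{\B},2}(T_N,\R^m)$ comes down to uniform boundedness of $|k|^{k_{\B}} \|\B[k]^{\dagger}\|_{\mathrm{op}}$; this is where constant rank enters, since (a) together with constant rank of $\A$ forces $\rank \B[\xi] \equiv r$, making $\B[\xi]^{\dagger}$ smooth and $(-k_{\B})$-homogeneous on $\R^N \setminus \{0\}$, and hence uniformly bounded on the unit sphere.

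For (b) $\Rightarrow$ (a), \ref{prop:3} yields $\Image \B[\xi] \subseteq \ker \A[\xi]$ for every $\xi \neq 0$. The reverse inclusion follows by testing \ref{prop:2} against single Fourier modes: given $\xi_0 \in \Z^N \setminus \{0\}$ and $w \in \ker \A[\xi_0]$, the test function $u(x) = \mathrm{Re}\bigl(w\, e^{2\pi i \xi_0 \cdot x}\bigr)$ lies in $L^2_{\av}(T_N,\R^d) \cap \ker \A$, so \ref{prop:2} produces a $v$ whose $\xi_0$-th Fourier coefficient exhibits $w$ as an element of $\Image \B[\xi_0]$; rescaling gives $\Image \B[\xi] = \ker \A[\xi]$ on $\Q^N \setminus \{0\}$. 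I expect the main obstacle to be the passage from rational to irrational directions. My plan for this step is to apply the open mapping theorem to the solution operator $u \mapsto v$ furnished by \ref{prop:2} to obtain a uniform estimate $|\xi|^{k_{\B}}\|\B[\xi]^{\dagger}\|_{\mathrm{op}} \leq C$ on $\Q^N \setminus \{0\}$, and then, for an arbitrary $\xi_0 \in \R^N \setminus \{0\}$ and $w \in \ker \A[\xi_0]$, to choose a rational sequence $\xi_n \to \xi_0$ together with $w_n \in \ker \A[\xi_n]$ with $w_n \to w$ (using continuity of $\xi \mapsto \ker \A[\xi]$ from the constant rank of $\A$). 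The uniform bound makes the preimages $v_n := \B[\xi_n]^{\dagger} w_n$ bounded, and continuity of $\xi \mapsto \B[\xi]$ lets me pass to the limit along a subsequence to obtain $w \in \Image \B[\xi_0]$.
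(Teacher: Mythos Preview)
Your proposal is correct and follows the same Fourier-analytic route as the paper: (a) $\Leftrightarrow$ (c) via $(\ker M)^\perp = \Image M^T$, and (a) $\Leftrightarrow$ (b) by inverting $\B[\lambda]$ on Fourier coefficients (the paper picks the preimage in $(\ker \B[\lambda])^\perp$, which is exactly your Moore--Penrose pseudoinverse). Your treatment of (b) $\Rightarrow$ (a) is in fact more careful than the paper's one-line ``very similar argument'': the open-mapping bound and the rational-to-irrational limiting step you outline supply a detail the paper leaves implicit.
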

\begin{proof}
\ref{prop:1} $\Leftrightarrow$ \ref{prop:adjoint} is just a purely algebraic calculation, using that $(\ker \A[\xi])^{\perp} = \Image \A^{\ast}[\xi]$. We just need to verify \ref{prop:1}  $\Leftrightarrow$ \ref{prop:exact} . To show that \ref{prop:1}  implies \ref{prop:exact} , note that we may write $u \in L^2 \cap C^{\infty}(T_N,\R^d)$ as \[
u(x)= \sum_{\lambda \in \Z^N} \hat{u}(\lambda) e^{-2 \pi i x \cdot \lambda}.
\]
Note that $u \in \ker \A$ if and only if $\hat{u}(\lambda) \in \ker \A[\lambda]$, i.e. $\hat{u}(\lambda) = \B[\lambda](\hat{v}(\lambda))$ for some $\hat{v}(\lambda) \in \R^d$. We may choose $v$ in the orthogonal complement of $\ker \B[\lambda]$. Hence, we may bound \[
\V \hat{v}(\lambda) \V \leq C \V \lambda \V^{-k} \hat{u}(\lambda) \]
and thus $v$, defined by \[
v(x)= \sum_{\lambda \in \Z^N} \hat{v}(\lambda) e^{-2 \pi i x \cdot \lambda},
\]
is bounded in $W^{k_{\B},2}(T_N,\R^d)$ and satisfies $\B v= u$. $\A \circ \B =0$ follows by a calculation using the Fourier transform. The converse direction that \ref{prop:exact}  implies \ref{prop:1}  follows a very similar argument.

\end{proof}
\begin{rem}
Note that condition \ref{prop:3} can be generalised to domains $\Omega \subset \R^N$ in general, i.e. \ref{prop:3} holds for any $v \in C^{\infty}(\Omega,\R^m)$ if $\B$ is a potential of $\A$. Condition \ref{prop:2} cannot be extended to general domains $\Omega$. In particular, this holds for the pair $\B=\nabla$, $\A=\curl$ on a set $\Omega$ instead of $T_N$ only if $\Omega$ is simply connected.
\end{rem}

\textsc{Rai\c{t}\u{a}} showed the important equivalence of constant rank condition and existence of a potential \cite{Raita,Adolfo}.
\begin{prop} \label{Raita}
Let $\A$ be a homogeneous differential operator as in (\ref{A1}). $\A$ satisfies the constant rank property if and only if $\A$ admits a potential $\B$.
\end{prop}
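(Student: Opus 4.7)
I would prove the two implications separately.

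\emph{Potential $\Rightarrow$ constant rank.} A short semicontinuity argument suffices. For a polynomial matrix $M[\xi]$ the set $\{\xi : \rank M[\xi] \ge k\}$ is Zariski-open (cut out by the non-vanishing of a $k \times k$ minor), so $\rank \A[\xi]$ and $\rank \B[\xi]$ are both lower semicontinuous. Setting $r_A := \max_{\xi \ne 0}\rank \A[\xi]$ and $r_B := \max_{\xi \ne 0}\rank \B[\xi]$, the chain
\[
d - r_A \le \dim \ker \A[\xi] = \dim \Image \B[\xi] \le r_B, \qquad \xi \in \R^N \setminus \{0\},
\]
combined with the fact that the outer quantities are attained on open dense subsets, forces $r_B = d - r_A$ and equality throughout. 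Hence $\rank \A[\xi] \equiv r_A$.

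\emph{Constant rank $\Rightarrow$ existence of a potential.} Here I would build $\B$ by constructing its symbol as a homogeneous polynomial matrix whose columns form a basis of $\ker \A[\xi]$. Concretely, set $R := \R[\xi_1,\ldots,\xi_N]$ and consider the $R$-module of polynomial kernel elements
\[
K := \ker\bigl(\A[\xi] \colon R^d \to R^l\bigr).
\]
By Hilbert's basis theorem $K$ is finitely generated, and since $R$ is a regular Noetherian ring $K$ is projective; the Quillen--Suslin theorem then gives that $K$ is in fact free. A free basis, homogenised to a common degree $k_{\B}$ (by scaling with suitable powers of $|\xi|^2$, or equivalently by pre-composing with divergences as in the remark preceding the proposition), assembles into the symbol of a homogeneous differential operator $\B$ with $\A \circ \B = 0$.

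\emph{Main obstacle.} The only non-routine step is verifying the pointwise identity $\Image \B[\xi] = \ker \A[\xi]$ for \emph{every} $\xi \ne 0$, not just generically; the inclusion $\subseteq$ is automatic, and generic equality follows from the dimension count of the first paragraph. To rule out rank-drop at some exceptional $\xi_0 \ne 0$, I would exploit that constant rank makes the Moore--Penrose kernel projector $\Pi(\xi) = I - \A[\xi]^{+} \A[\xi]$ a rational matrix in $\xi$ whose denominator is, under constant rank, (up to harmless factors) a power of $|\xi|^2$. This provides polynomial extensions of any $w \in \ker \A[\xi_0]$ to elements of $K$, which must then lie in the $R$-span of the chosen basis; were the $v_i(\xi_0)$ to span only a proper subspace of $\ker \A[\xi_0]$, this would contradict the freeness of the basis. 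This interplay between the generic-rank algebra and the pointwise kernel behaviour at all $\xi \ne 0$ is the genuinely new ingredient of Rai\c{t}\u{a}'s theorem, and I expect it to be the hardest part to make rigorous.
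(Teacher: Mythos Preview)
The paper does not prove this proposition; it is quoted from \textsc{Rai\c{t}\u{a}}'s work and merely stated with a citation. So there is no ``paper's proof'' to match, and your proposal should be assessed on its own.

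Your easy direction (potential $\Rightarrow$ constant rank) is correct: the identity $\rank \A[\xi] + \rank \B[\xi] = d$ together with lower semicontinuity of both ranks forces both to be locally constant, hence constant on $\R^N\setminus\{0\}$.

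In the hard direction there is a genuine gap. The step ``since $R$ is a regular Noetherian ring, $K$ is projective'' is false as stated: over a regular ring every finitely generated module has \emph{finite projective dimension}, but submodules of free modules need not be projective (e.g.\ the ideal $(x,y)\subset k[x,y]$). Without projectivity you cannot invoke Quillen--Suslin, and without a free basis the rest of your construction collapses. One can sometimes salvage projectivity of the syzygy module from the constant-rank hypothesis via localisation, but this requires constant rank on all of $\mathrm{Spec}\,R$ (including the origin and non-closed points), which you do not have.

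Ironically, the tool you bring in only as a patch for the ``main obstacle'' is the actual engine of Rai\c{t}\u{a}'s proof. Under the constant-rank hypothesis the Moore--Penrose projector $\Pi(\xi)=I-\A[\xi]^{+}\A[\xi]$ is rational in $\xi$ (by Decell's formula), say $\Pi(\xi)=P(\xi)/q(\xi)$ with $q$ a homogeneous scalar polynomial that does not vanish on $\R^N\setminus\{0\}$. Then $\B[\xi]:=P(\xi)$ already has $\Image\B[\xi]=\Image\Pi(\xi)=\ker\A[\xi]$ for every $\xi\neq 0$, and homogeneity is arranged afterwards. Two minor corrections: the denominator $q$ is \emph{not} in general a power of $\lvert\xi\rvert^2$ (any homogeneous polynomial with no nonzero real root will do, and these are not all of that form), and the Quillen--Suslin detour is entirely unnecessary once you have $\Pi$. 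I would drop the module-theoretic argument and promote the pseudoinverse construction to the main line.
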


\subsection{$\mathcal{A}$-quasiconvexity}
\begin{defi}
Let $\A$ be a differential operator and $f: \R^d \to \R$ a measurable function. $f$ is called $\A$-quasiconvex if $\forall x \in \R^d$, $\phi \in C_{\av}^{\infty} (T_N,\R^d)$ with $\A \phi =0$ we have \begin{equation}
f(x) \leq \int_{T_N} f(x + \phi(y)) \dy.
\end{equation}
We call $f$ $\B$-potential-quasiconvex if $\forall x \in \R^d$, $\forall \psi \in C_0^{\infty}(\Omega,\R^d)$ we have \begin{equation}
f(x) \leq \frac{1}{\V \Omega \V} \int_{\Omega} f(x + \B \psi(y)) \dy.
\end{equation}
\end{defi}
\begin{defi}
Let $\Lambda \subset \R^d$ be a cone, i.e. $t \in \R_+$, $v \in \Lambda$ $\Rightarrow~tv \in \Lambda$. We call $f: \R^d \to \R$ $\Lambda$-convex, if $\forall x \in \R^d$, $v \in \Lambda$ the function \begin{displaymath}
t \to f(x + tv)
\end{displaymath}
is convex. We call $f: \R^d \to \R$ $\Lambda$-affine if the above map is affine ($f$ is $\Lambda$-convex and $-f$ is $\Lambda$-convex).
\end{defi}

\begin{prop} \label{prop:equivalence} Let $\A$ be a homogeneous differential operator satisfying the constant rank property and $\B$ a  potential of $\A$. Let $f: \R^d \to \R$ be a continuous function. The following statements are equivalent \begin{enumerate}
\item $f$ is $\A$-quasiconvex.
\item $f$ is $\B$-potential quasiconvex.
\item Let $Q=(0,1)^N$. Then for all $\phi \in C^{\infty}_c(Q,\R^m)$ and for all $x \in \R^d$ we have \begin{equation}
f(x) \leq \int_Q f(x + \B \phi(y)) \dy = 0.
\end{equation}
\item For all $\phi \in C^{\infty}(T_N,\R^m)$ and for all $x \in \R^d$ we have \begin{equation}
f(x) \leq \int_{T_N} f(x + \B \phi(y)) \dy = 0.
\end{equation}
\end{enumerate}
\end{prop}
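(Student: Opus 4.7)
My plan is to close the cycle $(1) \Leftrightarrow (4) \Leftrightarrow (3) \Leftrightarrow (2)$, with the real work happening in $(3) \Leftrightarrow (4)$ and in upgrading a $W^{k_\B,2}$ potential to a smooth one.

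$(1) \Leftrightarrow (4)$ is essentially Proposition \ref{prop:potentials} unpacked. If $\phi \in C^\infty(T_N,\R^m)$, then $\psi := \B\phi$ is $\A$-free by \ref{prop:3} and has zero mean (any partial derivative of a periodic function integrates to zero), so $(1)$ applied to $\psi$ yields $(4)$. Conversely, given $\psi \in C^\infty_\av(T_N,\R^d) \cap \ker \A$, part \ref{prop:2} provides $v \in W^{k_\B,2}(T_N,\R^m)$ with $\B v = \psi$; to upgrade $v$ to $C^\infty$ I would repeat the Fourier bound $\|\hat v(\lambda)\| \lesssim |\lambda|^{-k_\B} \|\hat\psi(\lambda)\|$ from the proof of that proposition (using that $\B$ also has constant rank and $\hat v(\lambda) \in (\ker \B[\lambda])^\perp$), so that the Schwartz decay of $\hat\psi$ transfers to $\hat v$. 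Testing $(4)$ against this $v$ then gives $(1)$.

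$(2) \Leftrightarrow (3)$ is a scaling argument: for a box $\Omega' = aQ+b$ and $T(y) = ay+b$, setting $\tilde\psi(y) := a^{-k_\B}\psi(Ty)$ yields $\B\tilde\psi(y) = (\B\psi)(Ty)$, and a change of variables matches the two normalised integrals. For general bounded $\Omega$ one localises via a cube inside $\Omega$ containing $\text{supp}\,\psi$.

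The heart of the proof is $(3) \Leftrightarrow (4)$. Direction $(4) \Rightarrow (3)$ is trivial: extend $\phi \in C_c^\infty(Q,\R^m)$ by zero to a periodic function on $T_N$, where the integrals over $T_N$ and $Q$ coincide. For $(3) \Rightarrow (4)$, given $\phi \in C^\infty(T_N,\R^m)$ I would rescale $\phi_n(y) := n^{-k_\B}\phi(ny)$, so that $\B\phi_n(y) = (\B\phi)(ny)$ is uniformly bounded in $n$, and multiply by a cutoff $\eta_n \in C_c^\infty(Q)$ equal to $1$ on $Q_n := (1/n,1-1/n)^N$ with $|\partial^\beta \eta_n| \leq Cn^{|\beta|}$. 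Leibniz expansion of $\B\psi_n$, with $\psi_n := \eta_n\phi_n$, gives $(\B\phi)(n\cdot)$ on $Q_n$, while each boundary-layer term is of the form $\partial^\beta \eta_n \cdot \partial_\alpha \phi_n$ with $|\alpha| + |\beta| = k_\B$; the scalings $n^{|\beta|}$ and $n^{|\alpha|-k_\B}$ cancel, keeping $\|\B\psi_n\|_{L^\infty(Q)}$ uniformly bounded. Applying $(3)$ to $\psi_n$ and splitting the integral, the boundary layer $Q \setminus Q_n$ has measure $O(1/n)$ and $f$ is locally bounded by continuity, so its contribution vanishes; the interior integral converges to $\int_{T_N} f(x + \B\phi(z))\dz$ by the periodisation identity $\int_Q g(ny)\dy \to \int_{T_N} g$. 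The main obstacle is precisely this scaling balance: $\phi_n = n^{-k_\B}\phi(n\cdot)$ is the unique rescaling making $\B\phi_n$ uniformly bounded, and it is only because each Leibniz remainder carries exactly $k_\B$ total derivatives that the cutoff errors stay under control.
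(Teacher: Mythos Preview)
Your argument is correct and complete in all essential points. The paper itself does not prove Proposition~\ref{prop:equivalence}; it only writes ``A proof of this statement (in the setting $\B=\nabla$) can be found in \cite[Section 4.7]{Mlecture}'' and moves on. What you have written is precisely the natural extension of that classical argument to a general constant-rank potential $\B$: the Fourier-side upgrade of the $W^{k_\B,2}$ potential to $C^\infty$ (using that $\B$ itself has constant rank, hence its Moore--Penrose inverse is $(-k_\B)$-homogeneous and continuous on the sphere) is exactly the missing ingredient beyond the gradient case, and your cutoff/rescaling for $(3)\Rightarrow(4)$ with the Leibniz bookkeeping is the standard M\"uller-type argument, correctly adapted to order $k_\B$.

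One small imprecision: in $(3)\Rightarrow(2)$ you write ``localise via a cube inside $\Omega$ containing $\spt\psi$''. For a general bounded open $\Omega$ (say an annulus) such a cube need not exist. The robust version is to take any cube $Q'\supset\spt\psi$ (or $Q'\supset\Omega$), extend $\psi$ by zero, apply the scaled form of $(3)$ on $Q'$, and rearrange using $\B\psi=0$ outside $\spt\psi$; this yields $|\Omega|f(x)\le\int_\Omega f(x+\B\psi)$ without any geometric assumption on $\Omega$. This is a two-line fix and does not affect the rest of your proof.
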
 
A proof of this statement (in the setting $\B= \nabla$) can be found in \cite[Section 4.7]{Mlecture}. Let us also mention following statement about equivalence of $\A$-quasiconvexity and weak lower semicontinuity (c.f. \cite{FM}).
\begin{prop} \label{Awlsc}
Let $1 < p \leq \infty$.
Let $f: \R^d \to \R$ be continuous and  \begin{displaymath}
0 \leq f(x,v) \leq C(1+ \V v \V^p), \quad \text{if } p<\infty.
\end{displaymath}
Then the functional $I: L^p(\Omega,\R^d) \to [0,\infty]$, defined by \begin{displaymath}
I (u) = \left \{ \begin{array}{ll} \int_{\Omega} f(x,u(x)) \dy &\text{if } \A u = 0 \text{ in the sense of distributions,}\\ \infty & \text{else,} \end{array} \right. 
\end{displaymath}
is sequentially weakly lower-semicontinuous (weakly$*$ if $p= \infty$) if and only if $f(x,\cdot)$ is $\A$-quasiconvex for almost every $x \in \Omega$.
\end{prop}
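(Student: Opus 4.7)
The plan is to establish the equivalence by proving the two implications separately, following the classical blow-up scheme of Fonseca--M\"uller. The constant rank property is essential in both directions for handling projections onto $\ker \A$, which are made tractable via the potential $\B$ supplied by Proposition \ref{Raita}.

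For the necessity direction (weak lower semicontinuity implies $\A$-quasiconvexity), I would fix $v_0 \in \R^d$, a Lebesgue point $x_0 \in \Omega$ of $f(\cdot,v_0)$, a small cube $Q = Q(x_0,r) \Subset \Omega$, and an arbitrary $\varphi \in C^\infty_\av(T_N,\R^d)$ with $\A \varphi = 0$. Testing $I$ against the oscillating sequence $u_n(x) = v_0 + \varphi(nx)$ (with a suitable cutoff outside $Q$ obtained via the potential $\B$: writing $\varphi = \B v$ on $T_N$ by Proposition \ref{prop:potentials} and cutting off $v$ smoothly, so that $\A u_n = 0$ is preserved), one has $u_n \weakto v_0$ in $L^p(Q)$ by the standard Riemann--Lebesgue lemma for periodic averages. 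Weak lower semicontinuity then gives
\[
\int_Q f(x,v_0) \dx \leq \liminf_{n\to\infty} \int_Q f(x, v_0+\varphi(nx)) \dx = \int_Q \int_{T_N} f(x, v_0+\varphi(y)) \dy \dx,
\]
and dividing by $\V Q \V$ and sending $r \to 0$ yields $\A$-quasiconvexity of $f(x_0,\cdot)$ for a.e.\ $x_0$.

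For the sufficiency direction ($\A$-quasiconvexity implies weak lsc), the main tool is the blow-up argument. Given $u_n \weakto u$ in $L^p(\Omega,\R^d)$ with $\A u_n = 0$, I would form the nonnegative Radon measures $\mu_n := f(\cdot, u_n)\,\mathcal{L}^N \llcorner \Omega$, extract a weakly$^\ast$ converging subsequence $\mu_n \weakstar \mu$, and Lebesgue-decompose $\mu = g\,\mathcal{L}^N + \mu^s$. It then suffices to prove $g(x_0) \geq f(x_0, u(x_0))$ at a.e.\ Lebesgue point $x_0$. A diagonal rescaling produces a sequence $w_k(y) := u_{n_k}(x_0 + r_k y)$ on $Q(0,1)$ satisfying $\A w_k = 0$, $w_k \weakto u(x_0)$ and
\[
g(x_0) \geq \liminf_{k\to\infty} \int_{Q(0,1)} f(x_0, w_k(y)) \dy.
\]
The main obstacle is that $w_k - u(x_0)$ is $\A$-free on $Q(0,1)$ but neither $T_N$-periodic nor of zero mean, so the defining inequality of $\A$-quasiconvexity cannot be applied directly. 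To bridge this gap I would invoke the equivalent formulation from Proposition \ref{prop:equivalence}, namely that $\A$-quasiconvexity coincides with $\B$-potential quasiconvexity tested on compactly supported functions on a cube. Writing $w_k - u(x_0) = \B \phi_k + e_k$ via a Fourier projection onto $\ker \A$ (which is well-defined as a bounded Fourier multiplier thanks to the constant rank hypothesis), and truncating $\phi_k$ by a Lipschitz truncation to render it compactly supported in $Q(0,1)$, one can absorb the error $e_k$ using the $p$-growth condition on $f$ together with continuity of $f(x_0,\cdot)$. The $\B$-potential quasiconvexity inequality applied to the truncated $\phi_k$ then closes the argument, yielding $f(x_0, u(x_0)) \leq g(x_0)$ and hence weak lower semicontinuity of $I$.
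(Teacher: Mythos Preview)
The paper does not give its own proof of this proposition; it is stated with a citation to Fonseca--M\"uller \cite{FM} and used as a black box. Your outline is precisely the Fonseca--M\"uller scheme (oscillating periodic test sequences for necessity, blow-up plus the constant-rank Fourier projection onto $\ker\A$ for sufficiency), so there is nothing to compare against beyond noting that your sketch matches the cited source. One minor simplification in the necessity step: the cutoff via the potential is unnecessary, since $u_n(x)=v_0+\varphi(nx)$ is already $\A$-free on all of $\Omega$ and the localisation to a Lebesgue point $x_0$ can be carried out after applying lower semicontinuity on shrinking subcubes.
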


\section{$\mathcal{A}$-quasiaffine functions} \label{sec:affin}
In the following, $\A$ is a homogeneous differential operator of the form (\ref{A1}), satisfying the constant rank property and the spanning property. We denote by $\B$ its potential, which exists due to Proposition \ref{Raita}.
\begin{defi} \label{defi:Aqa} Let $M \in C(\R^d)$.\begin{enumerate} [label=(\alph*)]
\item We call $M$ $\A$-quasiaffine if $M$  and $-M$ are $\A$-quasiconvex.
\item We call $M$ $\B$-potential-quasiaffine if $M$ and $-M$ are $\B$-potential-quasiconvex.
\end{enumerate}
\end{defi}
\begin{prop} Let $M:\R^d \to \R$ be continuous and let $\B$ be a potential of $\A$. The following statements are equivalent: \begin{enumerate} [label=(\alph*)]
\item $M$ is $\A$-quasiaffine.
\item \label{alalala} For all $u,v \in L^p(T_N,\R^d) \cap \ker \A$ with $p \geq d$ and $\int_{T_N} u(y) \dy = \int_{T_N} v(y) \dy$ we have \begin{equation}
\int_{T_N} M(u(y)) \dy = \int_{T_N} M(v(y)) \dy.
\end{equation}
\item $M$ is $\B$-potential-quasiaffine.
\end{enumerate}
\end{prop}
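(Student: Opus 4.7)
The plan is to treat the three implications separately. The equivalence \textup{(a)} $\Leftrightarrow$ \textup{(c)} is an immediate corollary of Proposition \ref{prop:equivalence}. Indeed, Definition \ref{defi:Aqa} states that $M$ is $\A$-quasiaffine precisely when both $M$ and $-M$ are $\A$-quasiconvex, and similarly for $\B$-potential-quasiaffinity. Applying Proposition \ref{prop:equivalence} separately to $M$ and to $-M$ yields the desired equivalence.

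The implication \ref{alalala} $\Rightarrow$ \textup{(a)} is obtained by specialisation: given $x \in \R^d$ and $\phi \in C^{\infty}_{\av}(T_N,\R^d)$ with $\A\phi = 0$, take $u \equiv x$ (constant, hence trivially in $\ker \A$) and $v = x + \phi$. Both have mean $x$, so \ref{alalala} gives $M(x) = \int_{T_N} M(x + \phi(y))\dy$, which is exactly the $\A$-quasiaffinity identity.

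The substantial content is the converse \textup{(a)} $\Rightarrow$ \ref{alalala}. Let $u,v \in L^p(T_N,\R^d) \cap \ker \A$ with common mean $x$. By symmetry it suffices to prove $\int_{T_N} M(u)\dy = M(x)$. Set $u_0 = u - x \in L^p_{\av}(T_N,\R^d) \cap \ker \A$ and invoke Proposition \ref{prop:potentials}\ref{prop:2}, extended from $L^2$ to $L^p$ by the standard Fourier multiplier / Calder\'on--Zygmund argument for the partial inverse of $\B$ on zero-mean functions, to produce $\psi \in W^{k_{\B},p}(T_N,\R^m)$ with $\B\psi = u_0$. Approximate $\psi$ in $W^{k_{\B},p}$ by smooth $\psi_n \in C^{\infty}(T_N,\R^m)$; then $\B\psi_n \to u_0$ in $L^p$ and, along a subsequence, almost everywhere. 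By \textup{(a)} $\Leftrightarrow$ \textup{(c)} applied to each smooth test function,
\[
M(x) = \int_{T_N} M(x + \B\psi_n(y))\dy \qquad \text{for every } n,
\]
and the task reduces to passing to the limit $n \to \infty$ on the right hand side.

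The main obstacle is precisely this limit passage, since the approximants converge only in $L^p$ rather than uniformly. Continuity of $M$ yields $M(x + \B\psi_n) \to M(u)$ almost everywhere, and Vitali's theorem then reduces the problem to equi-integrability of the family. This is where the hypothesis $p \geq d$ enters: $\A$-quasiaffine functions are polynomials of degree at most $d$, hence satisfy a growth bound $|M(w)| \leq C(1 + |w|^d)$, so the family $\{M(x + \B\psi_n)\}$ is bounded in $L^{p/d}$ with $p/d \geq 1$. Combined with almost-everywhere convergence this gives the required equi-integrability and yields $\int_{T_N} M(x + \B\psi_n)\dy \to \int_{T_N} M(u)\dy$, completing the argument. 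The polynomial structure used here can either be imported a posteriori from Theorem \ref{thm:A}, or established at this stage by a direct Fourier/plane-wave argument: testing the $\A$-quasiaffinity identity with single-frequency functions $\phi(y) = w\,\eta(\xi \cdot y)$ for $w \in \ker \A[\xi]$ and zero-mean $\eta$ shows that $M$ is polynomial along every direction of the characteristic cone $\Lambda_{\A}$, and the spanning property then globalises this to polynomial growth of degree at most $d$ on $\R^d$.
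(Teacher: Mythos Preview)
Your argument follows the same outline as the paper's (very terse) proof: the equivalence \textup{(a)}$\Leftrightarrow$\textup{(c)} via Proposition~\ref{prop:equivalence} applied to $M$ and $-M$, the trivial specialisation \ref{alalala}$\Rightarrow$\textup{(a)}, and the use of the polynomial degree bound $\deg M\le d$ (Theorem~\ref{prop:polynomial}\ref{prop:poly2},\ref{prop:poly4}) to justify the threshold $p\ge d$ in the limit passage for \textup{(a)}$\Rightarrow$\ref{alalala}.

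There is, however, a genuine gap in your Vitali step at the borderline $p=d$: boundedness in $L^{p/d}=L^1$ together with almost-everywhere convergence does \emph{not} imply equi-integrability. The fix is to use the polynomial structure more directly rather than only through a growth bound. Expanding $M$ as a finite sum of monomials of degree $k\le d$ in the components of its argument, each such monomial in $x+\B\psi_n$ converges in $L^{p/k}\hookrightarrow L^1$ by H\"older's inequality (since the components converge in $L^p$), and summing gives $M(x+\B\psi_n)\to M(u)$ in $L^1$ outright, with no appeal to Vitali. A second, more minor point: the detour through the potential and the $L^p$ extension of Proposition~\ref{prop:potentials}\ref{prop:2} is avoidable and in fact problematic at $p=\infty$, where Calder\'on--Zygmund theory does not apply. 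Since $\A$ has constant coefficients, mollification on the torus preserves $\ker\A$ and zero mean, so one can approximate $u_0\in L^p_{\av}(T_N,\R^d)\cap\ker\A$ directly by $\phi_n\in C^{\infty}_{\av}(T_N,\R^d)\cap\ker\A$ and invoke the $\A$-quasiaffinity identity itself; the limit passage is then the same monomial argument (or dominated convergence when $p=\infty$).
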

This directly follows from Proposition \ref{prop:equivalence}; for the bound $p \geq d$ in \ref{alalala} we indeed also need Theorem \ref{prop:polynomial} \ref{prop:poly2} and \ref{prop:poly4}.

Note that we can already infer the following strong properties for $\A$-quasiaffine functions using basic methods. One key point is that, for a differential operator $\A$ satisfying the constant rank and the spanning property, $\A$-quasiaffine functions are already $\Lambda_{\A}$-affine.
\begin{thm}\label{prop:polynomial} \begin{enumerate} [label=(\alph*)]
\item \label{prop:poly1} Let $f: \R^d \to \R$ be $\A$-quasiconvex and continuous. Then $f$ is also $\Lambda_{\A}$-convex.
\item \label{prop:poly15} Let $f \in C^2(\R^d)$. Then $f$ is $\Lambda_{A}$-convex if and only if for all $x \in \R^d$ and $v \in \Lambda_{A}$ \[
D^2 f (x) [v,v] = \frac{\partial^2}{\partial t^2} f(x +tv) \geq 0.
\]
\item \label{prop:poly2} Let $M: \R^d \to \R$ be $\A$-quasiaffine and continuous. Then $M$ is also $\Lambda_{\A}$-affine.
\item \label{prop:poly25} Let $M \in C^2(\R^d)$. Then $f$ is $\Lambda_{A}$-affine if and only if for all $x \in \R^d$ and $v \in \Lambda_{A}$ \[
D^2 M (x) [v,v] = \frac{\partial^2}{\partial t^2} M(x +tv) = 0.
\]
\item \label{prop:poly3} Let $M: \R^d \to \R$ be a polynomial of degree 2. Then $M$ is $\A$-quasiaffine if and only if $M$ is $\Lambda_{\A}$-affine.

\item \label{prop:poly4}Any $\Lambda_{\A}$-affine map is a polynomial of degree $\leq d$.
\item \label{prop:poly5} Any partial derivative of a $\Lambda_{\A}$-affine map is also $\Lambda_{A}$-affine.
\item \label{prop:poly55} A homogeneous polynomial $M \colon \R^d \to \R$ of degree $\geq 3$ is $\Lambda_{\A}$-affine if all its partial derivatives $\partial_i M$ $i \in \{1,...,d\}$ are $\Lambda_{\A}$-affine.
\item \label{prop:poly6} There exists a basis of homogeneous polynomials of the space of $ \Lambda_{\A}$-affine maps.
\end{enumerate}
\end{thm}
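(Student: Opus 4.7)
The nine parts are tightly coupled; I would prove them in the listed order, repeatedly exploiting the plane-wave construction $\psi(y) = h(\xi \cdot y)\,v$, where $\xi \in \R^N \setminus \{0\}$, $v \in \ker \A[\xi]$, and $h \colon \R \to \R$ is $1$-periodic with zero mean. For $\xi$ of rational direction (always achievable by scaling within the ray $\R_+ \xi$), $\psi \in C^\infty_\av(T_N, \R^d) \cap \ker \A$; the general case follows by approximation using the continuity of the kernel bundle guaranteed by the constant-rank property. For \ref{prop:poly1}, plugging such a $\psi$ into the $\A$-quasiconvexity inequality gives $f(x) \leq \int_0^1 f(x + h(t) v) \dt$; choosing $h$ to be a step function approximating a two-point convex combination (and then mollifying) produces the one-dimensional Jensen inequality along every direction $v \in \Lambda_\A$, i.e.\ $\Lambda_\A$-convexity. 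Part \ref{prop:poly15} is the standard one-variable $C^2$ characterisation of convexity, and \ref{prop:poly2}, \ref{prop:poly25} follow by applying \ref{prop:poly1}, \ref{prop:poly15} to $\pm M$.

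For \ref{prop:poly3}, the nontrivial direction is $\Lambda_\A$-affine $\Rightarrow$ $\A$-quasiaffine. Writing $M(w) = c + \ell(w) + w^T C w$ with $C$ symmetric, the constant, linear, and cross terms in $\int_{T_N} M(v + \psi) \dy$ collapse to $M(v)$ because $\int \psi = 0$, reducing the claim to $\int_{T_N} \psi^T C \psi \dy = 0$. Parseval turns this integral into $\sum_{\lambda \in \Z^N \setminus \{0\}} \hat\psi(\lambda)^T C\,\overline{\hat\psi(\lambda)}$, with each $\hat\psi(\lambda)$ in the complexification of $\ker \A[\lambda] \subset \Lambda_\A$. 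Polarising the real identity $u^T C u = 0$ on $\ker \A[\lambda]$ gives $u^T C w = 0$ for real $u, w \in \ker \A[\lambda]$; decomposing $\hat\psi(\lambda) = p + iq$ with $p, q$ real in $\ker \A[\lambda]$ then yields $\hat\psi(\lambda)^T C\,\overline{\hat\psi(\lambda)} = p^T C p + q^T C q = 0$.

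Items \ref{prop:poly4}--\ref{prop:poly6} are structural/algebraic. For \ref{prop:poly4}, the spanning property furnishes a basis $\{v_1, \dots, v_d\} \subset \Lambda_\A$ of $\R^d$; in coordinates $x = \sum t_i v_i$, the function $g(t_1, \dots, t_d) := f(\sum t_i v_i)$ is affine in each $t_i$ separately, so $g(t_1, \dots) = a(t_2, \dots, t_d) + t_1\, b(t_2, \dots, t_d)$ with $a = g|_{t_1 = 0}$ and $b = g|_{t_1 = 1} - g|_{t_1 = 0}$. Both $a$ and $b$ inherit slicewise affinity in the remaining coordinates, and an induction on $d$ shows $g$ is multi-affine, hence a polynomial of total degree $\leq d$; no regularity assumption on $f$ is needed. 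Item \ref{prop:poly5} is immediate from \ref{prop:poly25} by differentiating $D^2 M(x)[v,v] \equiv 0$ in $x_i$. For \ref{prop:poly55}, the same computation gives $\partial_k (D^2 M(x)[v,v]) = D^2(\partial_k M)(x)[v,v] = 0$, so $x \mapsto D^2 M(x)[v,v]$ is constant; being homogeneous in $x$ of degree $n - 2 \geq 1$, it must vanish identically, proving $\Lambda_\A$-affinity via \ref{prop:poly25}. Finally, \ref{prop:poly6} follows from \ref{prop:poly4}: splitting a $\Lambda_\A$-affine polynomial $M = \sum_k M_k$ into homogeneous components, the polynomial identity $0 = \sum_k D^2 M_k(x)[v,v]$ in $x$ has homogeneous pieces of distinct degree $k - 2$ which must vanish separately, so each $M_k$ is itself $\Lambda_\A$-affine.

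The main obstacle is \ref{prop:poly3}: one must pass from a real vanishing condition on the cone $\Lambda_\A$ to a Hermitian vanishing condition on its complexified linear components, and the polarisation step is legitimate only because each $\ker \A[\lambda]$ is a genuine \emph{linear subspace} contained in $\Lambda_\A$; direct polarisation on $\Lambda_\A$ itself would fail, since the cone is not closed under addition. The only other delicate point is ensuring in \ref{prop:poly1} that the plane-wave test function is periodic on $T_N$ for $\xi$ of arbitrary direction, which is dispatched by the constant-rank approximation mentioned above.
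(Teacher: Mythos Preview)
Your proof is correct and follows essentially the same strategy as the paper: plane-wave test functions for \ref{prop:poly1}, the one-variable $C^2$ criterion for \ref{prop:poly15}--\ref{prop:poly25}, Plancherel on the Fourier side for \ref{prop:poly3}, multi-affine coordinates in a $\Lambda_\A$-basis for \ref{prop:poly4}, and homogeneity bookkeeping for \ref{prop:poly55}--\ref{prop:poly6}. Your treatment of \ref{prop:poly3} is in fact more careful than the paper's sketch (you explicitly handle the complexification via polarisation on the real subspace $\ker\A[\lambda]$), and your argument for \ref{prop:poly6}---separating the identity $\sum_k D^2 M_k(x)[v,v]=0$ by homogeneous degree in $x$---is a clean shortcut compared to the paper's inductive peeling via \ref{prop:poly55}.
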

\begin{proof}
\ref{prop:poly1} follows from considering test functions of the form $v e^{-2 \pi i \lambda x}$ for $v \in \ker \A[\lambda]$ (c.f. \cite{FM}). \ref{prop:poly15} follows from the classical equivalence of convexity and $f''(x) \geq 0$ for $f \in C^2(\R)$.  \ref{prop:poly2} and \ref{prop:poly25} then directly follow by Definition \ref{defi:Aqa}. \ref{prop:poly3} relies on Plancherel's identity, which is valid for quadratic forms. In particular, as all affine functions are automatically $\A$-quasiaffine, we may consider $M$ to be $2$-homogeneous. Then, using Plancherel's identity, we find that \[
\int_{T_N} M(u(y)) \dy =\sum_{\lambda \in \Z^N} M(\hat{u}(\lambda)).
\]
As $M$ is homogeneous of degree $2$ and $\hat{u}(\lambda) \in \Lambda_{\A}$, it follows that $M(\hat{u}(\lambda)) =0$ for $\lambda \neq 0$.

Ad \ref{prop:poly4}: Let now $v_1,...,v_d$ be a basis of $\R^d$, which is contained in $\Lambda_{\A}$. The existence of such a basis is ensured by the \textit{spanning property} for $\A$. Denote by $\lambda_1(y),...,\lambda_n(y)$ the coordinates with respect to this basis. We may write a $\Lambda_{A}$-affine function $f$ as\[
f(y) = \tilde{f} (\lambda_1,...,\lambda_d).
\]
Due to $\Lambda_{\A}$-affinity, we know that the map \[
\lambda_i \mapsto \tilde{f}(\lambda_1,...,\lambda_d). 
\]
is affine for fixed $i \in \{1,...,d\}$ and fixed $\lambda_j$ $j \neq i$ . Hence, $\tilde{f}$ must be a polynomial in $\lambda_i$. In particular, as $\tilde{f}$ is affine in each $\lambda_i$, it has at most degree $d$.

The property \ref{prop:poly5} follows from \ref{prop:poly25}. To see \ref{prop:poly55} note that \begin{align*}
   D^2M(x)[v,v] &= \int_0^1 D^3M(tx)[v,v,x] \dt + D^2 M(0)[v,v]\\&= \int_0^1 D^2 \left( \frac{\partial}{\partial x}M \right)(tx) [v,v]+ D^2M(0)[v,v]. 
\end{align*}

As $M$ is homogeneous of degree strictly larger than two, $D^2M(0) =0$ and therefore $M$ is $\A$-quasiaffine.

For \ref{prop:poly6} we use \ref{prop:poly55}. Write $f= \sum_{i=1}^d f_i$ for $i$-homogeneous polynomials $f_i$. We may consider $\tilde{f}= f - f_0 - f_1$, as $f_0$ and $f_1$ are affine and hence $\Lambda_{\A}$-affine. Observe that then $\Lambda_{\A}$-affinity yields $f (x) =0$ for all $x \in \Lambda_{\A}$. In particular, $f_i(x)=0$ for all $i=2,...,d$ and $x \in \Lambda_{\A}$. 

\noindent But this implies  $\Lambda_{\A}$-affinity for $f_2$. Considering $\bar{f} = \nabla (f- f_0-f_1-f_2)$, the statement \ref{prop:poly55} and an inductive argument, we get that $f_0,...,f_d$ are all already $\Lambda_{\A}$-affine. Therefore, there must be a basis of homogeneous polynomials for $\Lambda_{\A}$-affine maps.
\end{proof}

\begin{rem} \begin{enumerate}[label=\alph*)]
    \item Due to Theorem \ref{prop:polynomial} \ref{prop:poly55}, if there is $\Lambda_{A}$-affine polynomial $f$ of degree $k$, then there is also a $\Lambda_{\A}$-affine polynomial of degree $k-1$. In particular, the question of existence of non-affine $\Lambda_{\A}$-affine functions reduces to the existence of quadratic $\Lambda_{\A}$-affine functions. Recall that $\A$-quasiaffine functions are $\Lambda_{\A}$-affine functions and the converse holds for quadratic functions. Hence, the existence of non-trivial $\A$-quasiaffine functions reduces to the existence of a quadratic function vanishing on $\Lambda_{\A}$.
    \item \textsc{\v{S}ver\'{a}k} showed in \cite{Sverak}, that the other direction in \ref{prop:polynomial} \ref{prop:poly1} is not true, i.e. there exist $\Lambda_{\A}$-convex functions that are not $\A$-quasiconvex.
    \item  The converse implication in \ref{prop:polynomial} \ref{prop:poly2} is false, i.e. $\Lambda_{\A}$-affinity does not imply $\A$-quasiconvexity (c.f. Lemma \ref{lem:counter}, \cite{BCO}).
\end{enumerate}
\end{rem}
Let us now proof Theorem \ref{thm:A}. For this, we roughly follow the proof of this statement for $\B= \nabla^k$ in \cite{BCO}. Note that by considering the potential $\B$ of $\A$, (b) in Theorem \ref{thm:A} is equivalent to 
\begin{itemize}
    \myitem[(b')] \label{mt:2} $M$ is a polynomial and $\forall x \in \R^d$, $\forall  r \geq 2$, $\forall \xi_1,...,\xi_r \in \R^d$ which are linear dependent and $ \forall w_1,...,w_r \in \R^m$ we have \begin{equation} \label{Aqa}
D^r M (x) [\B[\xi_1](w_1),...,\B[\xi_r](w_r)] =0 .
\end{equation}
\end{itemize}

\begin{proof}[Proof of Theorem \ref{thm:A}] The validity of the implications $\ref{thmA:1} \Leftrightarrow \ref{thmA:4} \Leftrightarrow \ref{thmA:5}$ follows from Theorem \ref{Awlsc}. Indeed, for $\phi \in C_c^{\infty}(\R^N,\R^d)$ (or $\phi \in L^1$ for \ref{thmA:4}), consider 
\[
I_{\phi}(u) = \begin{cases} \int_{\Omega} M( u(x)) \cdot \phi(x) \dx & \text{if } \A u =0 \\ \infty & \text{ else.}
\end{cases}
\]
This functional is weakly continuous if and only if, for all $x \in \R^d$, the map $v \mapsto M(v) \cdot \phi(x)$ is $\A$-quasiaffine, which is equivalent to $\A$-quasiaffinity of $M$ (for more details we refer to \cite{RG}).

We now prove $\ref{thmA:1} \Leftrightarrow \ref{thmA:3}$. If $M$ is $\B$-potential-quasiaffine, then by Theorem \ref{prop:polynomial}, it is a polynomial and hence it is even $C^{\infty}$. Moreover, for all $u \in C^k(\bar{\Omega})$ and all $\phi \in C^{\infty}_c(\Omega,\R^m)$ we have \begin{equation} \label{calcEL}
\begin{aligned}
0& = \frac{d}{dt}\left( \int_{\Omega} M(\B u(y) + t \B \phi(y)) \dy \right)_{\V t=0} \\
~& = \int_{\Omega} \frac{d}{dt} \left( M(\B u(y) +  t \B \phi(y)) \right) _{\V t=0}  \dy \\
~& = \int_{\Omega} DM (\B u(y)) \cdot \B \phi(y) \dy.
\end{aligned}
\end{equation}
Thus, \eqref{EL:intro} holds in the sense of distributions if $M$ is $\B$-potential-quasiaffine. The same calculation as in \eqref{calcEL} also shows that, if \eqref{EL:intro} holds, then $M$ is $\A$-quasiaffine.

It remains to show that $\ref{thmA:1} \Leftrightarrow \ref{mt:2}$. First, we prove the direction $\ref{thmA:1} \Rightarrow \ref{mt:2}$.

If $r=2$, note that $\B[\lambda \xi] = \lambda ^{k_{\B}} \B[\xi]$ for $\xi \in \R^N$ and $\lambda \in \R \back \{0\}$. Hence, if $\xi_1$ and $\xi_2$ are linear dependent and nonzero, we may write $\xi_2 = \lambda \xi_1$ and \begin{displaymath}
\B[\xi_2](w_2) = \B[\xi_1] (\lambda^{k_{\B}} w_2).
\end{displaymath}
Therefore, we may only consider $\xi_1=\xi_2 = \xi$. Thus,
\begin{displaymath}
\begin{aligned} 
D^2 M(x) [v_1,v_2] &= D^2 M(x) [\B[\xi](w_1),\B[\xi](w_2)] \\
&= \frac{1}{2}  D^2 M(x) [\B[\xi](w_1+w_2),\B[\xi](w_1+w_2)]\\ &- \frac{1}{2}D^2 M(x) [\B[\xi](w_1),\B[\xi](w_1)] -\frac{1}{2} D^2 M(x) [\B[\xi](w_2),\B[\xi](w_2)] \\ 
&=0.
\end{aligned}
\end{displaymath}
We prove the statement for $r >2$ by induction. Let \eqref{Aqa} hold for some $r \in \N$. We consider linear dependent $\xi_1,...,\xi_{r+1} \in \R^N$ and $w_1,...,w_{r+1} \in \R^m$. First, suppose that $\xi_1,...,\xi_{r}$ are already linear dependent. Then by induction hypothesis,  \begin{displaymath}
D^r M(x) [\B[\xi_1](w_1),...,\B[\xi_r](w_r)] =0 \quad \forall x \in \R^d.
\end{displaymath}
Taking the derivative in direction $\Bop(\xi_{r+1}) (w_{r+1})$, the result is also $0$. Hence, \begin{displaymath}
D^{r+1} M(x) [\B[\xi_1](w_1),...,\B[\xi_{r+1}](w_{r+1})] =0 .
\end{displaymath}
We may suppose that $\xi_{r+1}$ can be written as a linear combination of linear independent $\xi_1,...,\xi_r \in \R^N \back \{0\}$. Due to the homogeneity of $\B[\cdot](w)$ we  may also suppose that \begin{displaymath}
\xi_{r+1} = \xi_1 + ... + \xi_r.
\end{displaymath}
Let $t_1,...,t_r \in \R$ be real parameters. Define the function $\phi \in C^{\infty}(T_N,\R^m)$ by \begin{displaymath}
\phi (y) := \begin{cases} \sum_{i=1}^{r+1} t_i w_i \cos(2 \pi \xi_i \cdot y) & \text{if } k_{\B} \text{ is even}, \\
                             \sum_{i=1}^{r+1} t_i w_i \sin(2 \pi \xi_i \cdot y) & \text{if } k_{\B}  \text{ is odd.} \end{cases}
\end{displaymath}
For the sake of simplicity we shall consider the case $k_\B =2k$, the other case is rather similar.

Then, $\B \phi$ is given by
\begin{displaymath}
\B \phi(y) = (-4\pi^2)^k \sum_{i=1}^{r+1} t_i \B[\xi_i](w_i) \cos(2 \pi \xi_i \cdot y).
\end{displaymath}
Now, $\B$-potential-quasiafffinity means that \begin{equation} \label{eq:pe}
\int_{T_N} M(x + \B \phi) \dy = M(x)   \quad \forall x \in \R^d.
\end{equation}
The left-hand side of \eqref{eq:pe} is a polynomial in $t_i$. The coefficient of $t_1 \cdot...\cdot t_{r+1}$ is the constant $(-4 \pi^2)^k$ times
\begin{align*} 
\int_{T_N} D^{r+1}M(x)& [\B[\xi_1](w_1),...,\B[\xi_{r+1}](w_{r+1})] \cdot \cos(2 \pi \xi_1 \cdot y) \cdot ... \cdot \cos(2 \pi \xi_{r+1} \cdot y)\dy \\
&=D^{r+1}M(x) [\B[\xi_1](w_1),...,\B[\xi_{r+1}](w_{r+1})] \\ &\quad \cdot  \int_{[0,1]^N} \cos( 2 \pi \xi_1 \cdot y) \cdot ... \cdot \cos(2 \pi \xi_{r} \cdot y) \cos(2 \pi \sum_{i=1}^r \xi_{i} \cdot y) \dy  \\
&=2^{-r} D^{r+1}M(x) [\B[\xi_1](w_1),...,\B[\xi_{r+1}](w_{r+1})].
\end{align*}
To calculate the integral in this equation, we just use the addition theorem for $\cos$ and Fubini. As the coefficient of $t_1 \cdot ... \cdot t_{r+1}$ is $0$ on the right-hand side of \eqref{eq:pe}, we get the desired result.

For the direction $\ref{mt:2} \Rightarrow \ref{thmA:1}$ we first claim that it suffices to show that $\forall x \in \R^d$, $\forall \phi \in C^{\infty}(T_N,\R^m)$ and for all $r \geq 2$ \begin{equation} \label{claimmt}
 \int_{T_N} D^r M(x) [\B \phi (y),...,\B \phi (y)] = 0.
\end{equation}
Suppose that (\ref{claimmt}) holds. We want to show \ref{thmA:1}. Take arbitrary $x \in \R^d$ and $\phi \in C^{\infty}(T_N,\R^m)$. Consider the Taylor series of $M$ at the point $x$ in the direction of $\B \phi (y) \in \R^d$. As $M$ is a polynomial of some degree $s$, $M$ equals its Taylor polynomial in $x$ of degree $s$, i.e. \begin{displaymath}
M(x+\B \phi(y)) = \sum_{r=0}^s \frac{1}{r!} D^r M(x) [\B \phi(y),...,\B \phi(y)]
\end{displaymath}
Integrating over $y \in T_N$, using \ref{mt:2} and the fact that $\B \phi$ has average $0$, yields \begin{align*}
\int_{T_N} M(x + \B \phi (y)) \dy &= \sum_{r=0}^s \int_{T_N}  \frac{1}{r!} D^r M(x) [\B \phi(y),...,\B \phi(y)] \dy \\
&= \int_{T_N} M(x) \dy + \int_{T_N } DM(x) \cdot \B \phi(y) \dy  \\
& \quad \quad + \sum_{r=2}^s \int_{T_N}  \frac{1}{r!} D^r M(x) [\B \phi(y),...,\B \phi(y)] \dy \\
&= \int_{T_N} M(x) \dy = M(x).
\end{align*}
Thus, it suffices to prove (\ref{claimmt}). To this end, we use the following formula: \\
If $f_1,...f_r \in C^{0}(T_N,\R)$, then \begin{equation} \label{Plancherel}
\int_{T_N} f_1(y) \cdot  ... \cdot f_r(y) \dy = \sum_{\xi_1,...,\xi_{r-1}\in \Z^N} \overline{\hat{f}_1(\xi_1)} \cdot \hat{f_2}(\xi_2) \cdot .... \cdot \hat{f_{r-1}}(\xi_{r-1}) \cdot \hat{f_r}\left(\xi_1 - \sum_{i=2}^{r-1} \xi_i \right).
\end{equation}
This equation can be derived by using Plancherel's theorem once for $f_1$ and $f_2\cdot ... \cdot f_r$ and then using a discrete version of the convolution  formula, i.e. \begin{displaymath}
\widehat{(f(\cdot) g(\cdot))} (\xi_1) = \sum_{\xi_2 \in \Z^n} \hat{f}(\xi_2) \cdot \hat{g} (\xi_1-\xi_2).
\end{displaymath}

Recall that $D^r M(x)[\cdot,...,\cdot]$ is a multilinear form (i.e. a homogenenous polynomial  in the entries). Therefore, we can use the identity \eqref{Plancherel}. Hence \begin{align*}
&\int_{T_N} D^r M(x) [\B \phi (y),...,\B \phi (y)] \\
&=\sum_{i=1}^{r-1} \sum_{\xi_i \in \Z} D^r M(x) \left[ \B[\xi_1] (\hat{\phi}(\xi_1)),...,\B[\xi_{r-1}](\hat{\phi}(\xi_{r-1})), \B[\xi_1-\sum_{i=2}^{r-1} \xi_i] (\hat{\phi}(\xi_1 - \sum_{i=2}^{r-1} \xi_i)) \right] \\
&= 0,
\end{align*}
as the vectors \[\xi_1,~...,~\xi_{r-1},~\xi_1 - \sum_{i=2}^{r-1} \xi_i \]
are linear dependent. Each summand equals $0$ due to condition \eqref{Aqa} in \ref{mt:2}. We have shown the claim and therefore that \ref{mt:2} implies \ref{thmA:1}.
\end{proof}
\begin{rem}
It shall be remarked, that Theorem \ref{thm:A} shows that $\A$-quasiaffinity is a local property; it can be verified by only considering gradients of $f$ pointwise. As it is shown in \cite{Kristensen}, this is not true for $\A$-quasiconvexity.
\end{rem}
Let us now prove that the condition \ref{mt:2}, which is equivalent to Theorem \ref{thm:A} \ref{thmA:2}, can be slightly weakened to the following:

\begin{thm} \label{main:2}
Let $\B \colon C^{\infty}(\R^N,\R^m) \to C^{\infty}(\R^N,\R^d)$ be a constant rank operator of order $k_{\B}$. Then $M \colon \R^d \to \R$ is $\A$-quasiaffine if and only if for all \[ 2 \leq r \leq \min \{ k_{\B}, N\} +1,\] $\xi_1,...\xi_r \in \R^N \back\{0\}$ linear dependent and $w_1,...,w_r \in \R^m$ we have  \begin{equation} \label{Aqa2}
D^{r} M(x) [\B[\xi_1](w_1),...,\B[\xi_{r}](w_{r})] =0.
\end{equation}
\end{thm}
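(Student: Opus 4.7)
The only-if direction is immediate from Theorem \ref{thm:A} by restricting \ref{mt:2} to the narrower range. For the converse, my plan is to bootstrap \eqref{Aqa2} to the full statement \ref{mt:2} at every $r \geq 2$ and then invoke Theorem \ref{thm:A}. First, the $r=2$ instance of \eqref{Aqa2} asserts $D^2 M(x)[\B[\xi](w_1),\B[\xi](w_2)] = 0$ for every $\xi \in \R^N\back\{0\}$; using $\Image \B[\xi] = \ker \A[\xi]$ together with Theorem \ref{prop:polynomial} \ref{prop:poly25}, \ref{prop:poly4}, this forces $M$ to be $\Lambda_{\A}$-affine and hence a polynomial of degree at most $d$, so in particular $M \in C^{\infty}(\R^d)$.

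I then argue by induction on $r$: the base cases $2 \leq r \leq \min\{k_{\B},N\}+1$ are the assumption. Fix $r > \min\{k_{\B},N\}+1$ and a linearly dependent tuple $\xi_1,\ldots,\xi_r \in \R^N\back\{0\}$. If some proper sub-tuple $\xi_{i_1},\ldots,\xi_{i_s}$ with $2 \leq s < r$ is itself linearly dependent, the inductive hypothesis at level $s$ supplies
\[
D^s M(y)[\B[\xi_{i_1}](w_{i_1}),\ldots,\B[\xi_{i_s}](w_{i_s})] \equiv 0
\]
as an identity in $y \in \R^d$, and differentiating $r-s$ times in the remaining directions $\B[\xi_j](w_j)$ yields the desired vanishing of $D^r M(x)[\B[\xi_1](w_1),\ldots,\B[\xi_r](w_r)]$. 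It thus remains to treat the case where $\xi_1,\ldots,\xi_r$ is \emph{minimally} dependent: since any $N+1$ vectors in $\R^N$ are already dependent, a minimally dependent family has size at most $N+1$, so $r \leq N+1$. Combined with $r > \min\{k_{\B},N\}+1$, this forces $k_{\B} < N$ and $r > k_{\B}+1$; using the homogeneity $\B[\lambda\xi] = \lambda^{k_{\B}} \B[\xi]$ to absorb scalars into the $w_i$, I may normalise $\xi_r = \xi_1+\ldots+\xi_{r-1}$.

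For this final case I exploit that $\B[\xi]$ is a homogeneous polynomial of degree $k_{\B}$ in $\xi$. Applying the polarization identity (equivalently, a Lagrange interpolation on a finite grid of $s$-values) to $\B\!\left[\sum_{i=1}^{r-1} s_i \xi_i\right]$ and then setting all $s_i=1$ yields a finite representation
\[
\B[\xi_r](w_r) = \sum_{j} a_j \B[\eta_j](w_r),
\]
where each $\eta_j$ is a linear combination of at most $k_{\B}$ vectors from $\{\xi_1,\ldots,\xi_{r-1}\}$. Since $k_{\B} < r-1$, for every $j$ the support $S_j \subseteq \{1,\ldots,r-1\}$ of $\eta_j$ is a proper subset, and the augmented family $\{\xi_i : i \in S_j\} \cup \{\eta_j\}$ (of size at most $k_{\B}+1$) is therefore linearly dependent. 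The base case at level $|S_j|+1 \leq k_{\B}+1$ gives the identical vanishing of $D^{|S_j|+1} M(y)$ on this family, and differentiating in the remaining directions $\B[\xi_i](w_i)$ for $i \notin S_j$, then summing over $j$, produces the required vanishing of $D^r M(x)$; this closes the induction, and Theorem \ref{thm:A} then delivers $\A$-quasiaffinity. The main obstacle I foresee is the polarization step: one must verify carefully that $\B[\xi_r]$ admits a representation of the above form in which each $\eta_j$ genuinely involves only $\leq k_{\B}$ of the $\xi_i$, so that the reduction to a base case really applies.
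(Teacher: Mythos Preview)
Your argument is correct and follows essentially the same route as the paper: the crucial step in both is that, since $\B\bigl[\sum_i s_i\xi_i\bigr](w)$ is a polynomial of degree $k_{\B}<r-1$ in $s$, the value at $s=(1,\dots,1)$ lies in the span of values with at least one $s_i=0$, reducing to a shorter dependent tuple. Your version sharpens this to supports of size at most $k_{\B}$ (so you land directly in the base-case range rather than descending one step at a time) and folds the case $\min\{k_{\B},N\}=N$ into the main argument via the observation that minimally dependent families have size $\leq N+1$; both are cosmetic streamlinings of the same idea.
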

\begin{rem} If $k_{\B}=1$, then \eqref{Aqa2} only needs to hold for $r=2$. In this case, \eqref{Aqa2} is equivalent to $\Lambda_{\A}$-affinity. Hence, as a special case of Theorem \ref{main:2}, we get the statement of Theorem \ref{thm:B}, that for first-order potentials, $\A$-quasiaffinity and hence $\B$-potential quasiaffinity are equivalent to $\Lambda_{\A}$-affinity.
\end{rem}

\begin{proof}
We just need to prove that if equation \eqref{Aqa2} is true for $2 \leq r \leq \min \{ k_{\B}, N\} +1$, then it also holds for $r \in \N$. Let us first deal with the case $\min \{ k_{\B}, N\} = N$. Note that then for $j>2$ and $r= N+ j$, there are $N+1$ vectors $\xi_i$, which are already linear dependent; say $\xi_1,...,\xi_{N+1}$ are linear dependent. Then, 
\[
D^{N+1} (x) [\B[\xi_1](w_1),...,\B[\xi_{N+1}](w_{N+1})] =0.
\]
Therefore, also 
\[
D^{N+j} (x) [\B[\xi_1](w_1),...,\B[\xi_{N+j}](w_{N+j})] =0.
\]

\medskip Suppose now that $k_{\B} \leq N$. If $k_{\B} =1$, then for all $\xi_1,\xi_2 \in \R^N \back \{0\}$ and $w \in \R^m$ \[
\B[\xi_1 + \xi_2](w)= \B[\xi_1](w) +\B[\xi_2](w) \in \spann \{ \B[\xi_1] (w) , \B[\xi_2](w) \}.
\]
We prove an analogue of this statement for $k_{\B} > 1$. Again, make the reductions from the proof of Theorem \ref{thm:A}. We just need to show that, for $r > k_{\B}+1$, $\xi_1,...,\xi_{r-1} \in \R^N \back \{0\}$ linear independent and  $w_1,...,w_r \in \R^m$, we have \begin{equation}\label{Claim}
D^r M (x) \left[\B[\xi_1](w_1),...,\B[\xi_{r-1}](w_{r-1}),\B[\xi_1+...+\xi_{r-1}](w_r)\right]  =0 .
\end{equation}
\textbf{Claim:}\textit{ For $w \in \R^m$ \begin{equation} \label{eq:span}
    \B\left[\sum_{i=1}^{r-1} \xi_i\right](w) \in \spann_{\lambda \in I} \left\{ \B \left[\sum_{i=1}^{r-1} \lambda_i \xi_i\right](w) \right\},
\end{equation}
where $r >k_{\B}+1$ and the set $I$ of coefficients  is given by \[
I= \big\{ \lambda \in \R^{r-1} \colon \lambda_i =0 \text{ for some } i \in \{1,...,r-1\} \big\}
\]}

\medskip Suppose that \eqref{eq:span} is proven. Then, for a finite index set $J \subset I$, we can write, \[
\B\left[\sum_{i=1}^{r-1} \xi_i\right](w) = \sum_{\lambda \in J} \B\left[\sum_{i=1}^{r-1} \lambda_i \xi_i\right](w) \]
and use that, for each $\lambda \in J$, there is $i \in \{1,...,r-1\}$ such that $\lambda_i =0$. W.l.o.g. $i=1$ for some fixed $\lambda \in J$. Then \begin{align*}
&D^r M(x) \left [\B[\xi_1](w_1),...,\B[\xi_{r-1}](w_{r-1}), \B[\sum_{i=2}^{r-1} \lambda_i \xi_{r-1}](w_r)\right] \\
&= \frac{\partial}{\partial_{ \B[\xi_1](w_1)}} D^{r-1} M(x) \left[\B[\xi_2](w_2),...,\B[\xi_{r-1}](w_{r-1}), \B[\sum_{i=2}^{r-1} \lambda_i \xi_{r-1}](w_r)\right].
\end{align*}
Note that we assume in \ref{Aqa2} that the right-hand side is $0$, whenever $r-1 \leq k_{\B}+1$, i.e. $r \leq k_{\B}+2$. Assuming that \eqref{eq:span} holds, one can prove \eqref{Claim} for all $r \in \N$ by an inductive argument.

It remains to prove the validity \eqref{eq:span}. Consider for $t_1,...,t_{r-1}$ the polynomial \[
P (t_1,...,t_{r-1}) =\B \left[\sum_{i=1}^{r-1} t_i \xi_i \right](w_r).
\]
This polynomial has degree $k_{\B} < r-1$. Hence, in every monomial of $P$ of the form $\prod_{i=1}^{r-1} t_i^{\alpha_i}$ there is at least one $j \in \{1,...,r-1\}$, such that $\alpha_j=0$. But we can recover the coeffients of these monomials by considering \[
\B\left[\sum_{i=1,i\neq j}^{r-1} t_i \xi_i\right](w_r).
\]
In particular, we can recover these coefficients by taking linear combinations of $P(\lambda)$ for suitable $\lambda \in \{ \mu \in \R^{r-1} \colon \mu_j =0\} \subset I$. Therefore, \eqref{eq:span} holds. This concludes the proof of Theorem \ref{main:2}.
\end{proof}
Theorem \ref{thm:B} is a special case of Theorem \ref{main:2}. In this setting, $k_{\B}=1$, i.e. $\A$-quasiaffinity of $M$ is equivalent to the fact that \[
D^2 M(x) [\B[\xi](w_1),\B[\xi](w_2)] =0.
\]
As it was already established in Theorem \ref{prop:polynomial} \ref{prop:poly25}, this is indeed equivalent to $\Lambda_{\A}$-affinity of $M$.

Let us recall the \textsc{Ball-Currie-Olver} example showing that $\A$-quasiaffinity does \textit{not} follow if \eqref{Aqa2} does not hold for all $2 \leq r \leq \min\{k_{\B},N\} +1$ (cf. \cite{BCO}). Let us consider the setting $k_{\B}=2$.

\begin{lem}[Ball, Currie, Olver] \label{lem:counter} There is a first-order differential operator $\A$ and a map $L: \R^d \to \R$ which is  $\Lambda_{\A}$-affine, but not $\A$-quasiaffine.
\end{lem}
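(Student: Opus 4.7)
Following Ball, Currie and Olver, I would construct an explicit pair $(\A,L)$ in the case $k_\B=2$ where $\Lambda_\A$-affinity fails to imply $\A$-quasiaffinity, because the trilinear condition $r=3$ of Theorem \ref{main:2} is violated.

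\textbf{Setup.} Take $\B=\nabla^2\colon C^{\infty}(\R^N,\R)\to C^{\infty}(\R^N,\R^{N\times N}_{sym})$ with $N\geq 3$, so that the first-order annihilator $\A$, defined in components by $(\A u)_{ijk}=\partial_i u_{jk}-\partial_j u_{ik}$, has $\ker \A$ equal to the Hessians of scalars. The Fourier symbol is $\B[\xi]w=w\,\xi\otimes\xi$ and
\[
\Lambda_\A=\{c\,\xi\otimes\xi:c\in\R,\ \xi\in\R^N\},
\]
the cone of rank-at-most-one symmetric matrices.

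\textbf{The polynomial.} Following BCO, exhibit a polynomial $L\colon\R^{N\times N}_{sym}\to\R$ of degree $\geq 3$ such that $t\mapsto L(A+t\,\xi\otimes\xi)$ is affine for every $A\in\R^{N\times N}_{sym}$ and every $\xi\in\R^N$, so that $L$ is $\Lambda_\A$-affine by Theorem \ref{prop:polynomial}\ref{prop:poly25}, but whose symmetric trilinear polarization satisfies
\[
D^3 L(x)[\xi_1\otimes\xi_1,\,\xi_2\otimes\xi_2,\,\xi_3\otimes\xi_3]\neq 0
\]
for some $x$ and some linearly dependent triple $\xi_1,\xi_2,\xi_3\in\R^N\setminus\{0\}$. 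By Theorem \ref{main:2} this nonvanishing already obstructs $\A$-quasiaffinity.

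\textbf{Failure of quasiaffinity.} To make the failure explicit I would pick $\xi_1,\xi_2,\xi_3\in\Z^N\setminus\{0\}$ with $\xi_1+\xi_2+\xi_3=0$ and scalars $w_1,w_2,w_3$ so that the trilinear form above is nonzero, and set
\[
\psi(y):=\sum_{i=1}^3 w_i\cos(2\pi\xi_i\cdot y).
\]
Then $\B\psi(y)=-4\pi^2\sum_i w_i\cos(2\pi\xi_i\cdot y)\,\xi_i\otimes\xi_i$ has zero mean. Expanding $L(x+\B\psi(y))$ in a Taylor series at $x$ exactly as in the proof of Theorem \ref{thm:A}, the linear term integrates to zero, the quadratic term vanishes by $\Lambda_\A$-affinity (rank-one Fourier modes pair only diagonally and are killed by $D^2 L(x)[v,v]=0$ for $v\in\Lambda_\A$), and the cubic term contributes a nonzero multiple of
\[
D^3 L(x)[\xi_1\otimes\xi_1,\xi_2\otimes\xi_2,\xi_3\otimes\xi_3]\cdot \int_{T_N}\prod_{i=1}^3 \cos(2\pi\xi_i\cdot y)\dy,
\]
the trigonometric integral equalling $\tfrac14$ precisely because $\xi_1+\xi_2+\xi_3=0$. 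Hence $\int_{T_N} L(x+\B\psi)\dy\neq L(x)$ and $L$ is $\Lambda_\A$-affine but not $\A$-quasiaffine.

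\textbf{Main obstacle.} The Fourier/Taylor computation mirrors the necessity half of the proof of Theorem \ref{thm:A} and is essentially routine. The hard step, which is the actual content of the lemma, is the algebraic construction of the cubic $L$: one needs a $\Lambda_\A$-affine polynomial whose fully polarized third derivative still fails to vanish on a linearly dependent triple of rank-one symmetric matrices, despite the fact that every $v\in\Lambda_\A$ individually satisfies $D^2 L(x)[v,v]=0$ and $D^3 L[v,v,v]=0$. This delicate construction is precisely what Ball, Currie and Olver carry out in \cite{BCO}.
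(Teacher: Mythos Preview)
Your approach mirrors the paper's Fourier/Taylor mechanism, but the explicit setup you chose cannot produce a counterexample. You take $\B=\nabla^2$ acting on \emph{scalar} functions ($m=1$), so that $\Image\B[\xi]=\spann\{\xi\otimes\xi\}$ is one-dimensional. In this situation $\Lambda_\A$-affinity already forces the trilinear condition of Theorem~\ref{main:2}: from $D^2M(x)[\xi^{\otimes2},\xi^{\otimes2}]=0$ for all $x,\xi$ one differentiates in any direction $A$ to get $D^3M(x)[\xi^{\otimes2},\xi^{\otimes2},A]=0$, and polarising in $\xi$ via $\xi=\xi_1+t\xi_2$ yields $D^3M(x)[\xi_1^{\otimes2},\xi_1\odot\xi_2,A]=0$ for all $A$. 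Taking $A=\xi_2^{\otimes2}$ and expanding $(\alpha\xi_1+\beta\xi_2)^{\otimes2}$ shows $D^3M(x)[\xi_1^{\otimes2},\xi_2^{\otimes2},\xi_3^{\otimes2}]=0$ whenever $\xi_1,\xi_2,\xi_3$ are linearly dependent. Hence every $\Lambda_\A$-affine $M$ on $\R^{N\times N}_{sym}$ satisfies \eqref{Aqa2} for $r=2,3$ and is $\A$-quasiaffine; no $L$ of the kind you postulate exists here. Your ``Main obstacle'' is therefore not merely a deferral to \cite{BCO} but an actual impossibility in your chosen framework.

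The paper (following \cite{BCO}) instead takes $\B=\nabla^2$ on \emph{vector}-valued maps with $N=2$, $m=3$, so that $\Image\B[\xi]=\{\xi\otimes\xi\otimes b:b\in\R^m\}$ is three-dimensional and the polarisation argument above breaks down. It then writes down the explicit alternating cubic $L(\nabla^2 u)=\sum_{\sigma\in S_3}\sgn(\sigma)\,\partial_x^2 u_{\sigma(1)}\,\partial_x\partial_y u_{\sigma(2)}\,\partial_y^2 u_{\sigma(3)}$, checks $\Lambda_\A$-affinity directly, and evaluates the integral on the concrete test function $u=(\cos 2\pi x,\cos 2\pi y,\cos 2\pi(x+y))$ to obtain $-\tfrac14\neq 0$. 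The extra $\R^m$-factor is precisely what makes room for a nontrivial alternating cubic that survives the rank-one constraint but fails the dependent-triple condition.
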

\begin{proof} Consider the differential operator $\B = \nabla ^2$, i.e. 
\begin{displaymath}
(\nabla^2 u)_{ijk} = \partial_i \partial_j u_k (i,j=1,...,N; k=1,...,m) 
\end{displaymath} and $\A$ the corresponding first order operator, such that $\B$ is a potential of $\A$ (cf. \cite{Meyers}).
The characteristic cone of $\A$ is the space of tensors of the form \begin{displaymath}
\lambda \otimes \lambda \otimes b \colon \lambda \in \Sphere^{N-1},~b \in \R^m.
\end{displaymath}
Now choose $N=2$ and $m=3$ and consider the map $L$ defined via \begin{equation}
L(\nabla^2 u) = \sum \limits_{\sigma \in S_3} \sgn(\sigma) \partial_x^2 u_{\sigma(1)} \partial_x \partial_y u_{\sigma(2)} \partial_y^2 u_{\sigma(3)}.
\end{equation}
One can check that this is affine in $\Lambda_{\A}$. On the other hand, one can check that ,for \begin{displaymath}
u(x_1,x_2) = \left( \begin{array}{c} \cos(2 \pi x_1) \\ \cos(2 \pi x_2) \\ \cos(2 \pi(x_1+x_2)), \end{array} \right)
\end{displaymath}
we have \begin{displaymath}
\int_{T_N} L(u(x_1,x_2)) \dx = -\frac{1}{4}.
\end{displaymath}
\end{proof}
\noindent We have seen in Theorem \ref{main:2}, that the answer to the question, whether \[
f~\Lambda_{A}\text{-convex} \Longrightarrow f~\A \text{-quasiaffine} \]
really depends on the order of the operator $k_{\B}$. We note that in view of the following lemmata, the minimal order of $k_{\B}$ of the potential $\B$ cannot be bounded in terms of the order of $\A$. In view of Theorem \ref{main:2}, the differential condition on $M$ for being $\A$-quasiaffine therefore depends only on the order of $\B$ and \textit{not} on the order of $\A$.
\begin{lem} \label{lem:ho}
Let $\B \colon C^{\infty}(\R^2,\R^m) \to C^{\infty}(\R^2,(\R^2)^k)$ be a differential operator such that  \[
\Image \B[\xi] = \Image \nabla^k [\xi]\quad \forall \xi \in \R^N \back \{0\},
\]
where $\nabla^k \colon C^{\infty}(\R^2,\R) \to C^{\infty}(\R^2,(\R^2)^k)$. Then the operator $\B$ is of order $k_{\B} \geq k$.
\end{lem}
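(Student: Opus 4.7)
\textbf{Proof plan for Lemma \ref{lem:ho}.} The symbol of the $k$-th gradient is $\nabla^k[\xi]a = a\,\xi^{\otimes k}$, so $\Image \nabla^k[\xi] = \R\,\xi^{\otimes k}$ is a one-dimensional line in $(\R^2)^{\otimes k}$ for every $\xi \neq 0$. The plan is to exploit this extreme thinness of the image, together with the polynomial nature of $\B[\xi]$, and conclude that $\B$ must carry enough derivatives to produce the factor $\xi^{\otimes k}$.

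First I would fix $w \in \R^m$ and set $p_I(\xi) := (\B[\xi]w)_I$ for each multi-index $I \in \{1,2\}^k$; by homogeneity $p_I$ is a polynomial in $(\xi_1,\xi_2)$ of degree exactly $k_{\B}$. The hypothesis $\Image \B[\xi] \subseteq \R\,\xi^{\otimes k}$ means $\B[\xi]w$ and $\xi^{\otimes k}$ are proportional for every $\xi \neq 0$, which by comparing components yields the relations
\begin{equation*}
p_I(\xi)\,\xi^J = p_J(\xi)\,\xi^I \qquad \text{for all }I,J \in \{1,2\}^k,
\end{equation*}
where $\xi^I := \xi_{i_1}\cdots\xi_{i_k}$. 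Taking $J=(1,\dots,1)$ and $I=(2,\dots,2)$ one obtains
\begin{equation*}
p_{(1,\dots,1)}(\xi)\,\xi_2^k = p_{(2,\dots,2)}(\xi)\,\xi_1^k
\end{equation*}
in the UFD $\R[\xi_1,\xi_2]$. Since $\gcd(\xi_1^k,\xi_2^k)=1$, this forces $\xi_1^k \mid p_{(1,\dots,1)}(\xi)$.

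Now the dichotomy is the key step: either $p_{(1,\dots,1)} \equiv 0$, or $\deg p_{(1,\dots,1)} \geq k$, in which case $k_{\B} \geq k$ and we are done. To rule out the first alternative I would use the relation $p_I(\xi)\,\xi_1^k = p_{(1,\dots,1)}(\xi)\,\xi^I$ again: if $p_{(1,\dots,1)} = 0$ then every $p_I = 0$, and since $w \in \R^m$ was arbitrary this gives $\B[\xi] \equiv 0$, contradicting $\Image \B[\xi] = \R\,\xi^{\otimes k} \neq \{0\}$ (which in turn uses the spanning-type assumption implicitly, since the image of $\nabla^k[\xi]$ is genuinely one-dimensional).

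I do not anticipate any real obstacle; the whole argument is essentially a divisibility computation in $\R[\xi_1,\xi_2]$, and the only mild subtlety is the use of unique factorisation to extract $\xi_1^k$ as a factor. Note that the argument uses the ambient dimension $N=2$ exclusively in the clean coprimality $\gcd(\xi_1^k,\xi_2^k)=1$; in higher dimensions the same idea works but with more index bookkeeping, which is why the lemma is stated in two variables.
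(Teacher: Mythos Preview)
Your argument is correct and follows essentially the same route as the paper: both exploit that $\Image\nabla^k[\xi]=\R\,\xi^{\otimes k}$ is one-dimensional, derive the relation $\xi_2^k\,(\B[\xi]w)_{1^k}=\xi_1^k\,(\B[\xi]w)_{2^k}$, and use unique factorisation in $\R[\xi_1,\xi_2]$ to force $\xi_1^k\mid(\B[\xi]w)_{1^k}$ and hence $k_\B\ge k$. The only cosmetic difference is how non-triviality of $(\B[\xi]w)_{1^k}$ is secured: the paper fixes $\xi_0=e_1+e_2$ and picks $w$ with $\B[\xi_0]w\neq 0$ explicitly, whereas you argue by contradiction that $p_{(1,\dots,1)}\equiv 0$ for every $w$ would force $\B[\xi]\equiv 0$; your phrasing of this last step would be cleaner if stated as ``assume $k_\B<k$, then for every $w$ the divisibility forces $p_{(1,\dots,1)}\equiv 0$, hence $\B[\xi]w=0$ for all $w$'', rather than running the dichotomy for a single fixed $w$.
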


\begin{proof}
We note that \[
\dim (\Image \nabla^k[\xi])=1. \]
Consider $\xi_0=e_1+e_2$ and the coordinates of \[
v_{1^k} = \partial_1^k u, \quad v_{2^k} = \partial_2^k u.
\]
There exist $w \in \R^m$, such that \[
\B [\xi_0](w) \neq 0, (\B [\xi_0](w))_{1^k} =1 = (\B [\xi_0](w))_{2^k}=1.
\]
Due to continuity of $\B[\cdot](w)$, there exists an open ball $B_r(\xi_0)$, such that, for all $\xi \in B_r(\xi_0)$,
\[ \B [\xi](w) \neq 0. \]
In particular, as the dimension of the image of $\nabla^k[\xi]$ (and therefore also of the image of $\B[\xi]$) is one, we then have, for all $\xi \in B_r(\xi_0)$,
\[ \xi_2^k (\B[\xi](w))_{1^k} = \xi_1^k (\B[\xi](w))_{2^k} \]
Hence, $(\B[\xi](v))_{1^k}$ and $(\B[\xi](v))_{2^k}$ are polynomials of degree larger than $k$ in $\xi$. Therefore, $\B$ has at least order $k$.
\end{proof}

\begin{coro}
Let $N>2$. \begin{enumerate}[label=(\alph*)]
    \item \label{coro:1} For any $k \in \N$, there exists a first-order operator $\A$, such that any potential $\B$ of $\A$ has order $k_{\B}\geq k$.
    \item \label{coro:2} For any $k \in \N$, there exists a first-order operator $\B$, such that any \textit{annihilator} $\A$ of $\B$ (i.e. an operator $\A$, such that $\B$ is a potential of $\A$) has order $k_{\A}\geq k$.
\end{enumerate}
\end{coro}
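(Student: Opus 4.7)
The plan is to exhibit, for (a), a concrete first-order $\A$ whose kernel symbol forces every potential to have order at least $k$, and then to deduce (b) by invoking the adjoint duality of Proposition \ref{prop:potentials}.

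For (a), I would take $\A$ to be the first-order compatibility operator of the scalar $k$-th gradient $\nabla^k \colon C^{\infty}(\R^N,\R) \to C^{\infty}(\R^N, \mathrm{Sym}^k \R^N)$. Concretely, for $v \in \mathrm{Sym}^k \R^N$, set
\[
(\A v)_{l, i_1, \ldots, i_k} = \partial_l v_{i_1, \ldots, i_k} - \partial_{i_1} v_{l, i_2, \ldots, i_k}.
\]
A short symbol computation (iterating the equation $\xi_l v_{i_1,\ldots,i_k} = \xi_{i_1} v_{l,i_2,\ldots,i_k}$ and using the symmetry of $v$) shows that $\ker \A[\xi]$ is the one-dimensional line $\mathrm{span}\{\xi^{\otimes k}\}$, so $\A$ has constant rank, is of first order, and admits $\nabla^k$ as a potential. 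If $\B \colon C^{\infty}(\R^N,\R^m) \to C^{\infty}(\R^N, \mathrm{Sym}^k \R^N)$ is any other potential of $\A$, then $\mathrm{Image}\, \B[\xi] = \mathrm{span}\{\xi^{\otimes k}\}$ for every nonzero $\xi$. Picking $w \in \R^m$ and $\xi_0$ with $\B[\xi_0](w) \neq 0$, I would then reproduce the argument of Lemma \ref{lem:ho} in this higher-dimensional setting: on a neighborhood of $\xi_0$ the components $(\B[\xi](w))_{1,\ldots,1}$ and $(\B[\xi](w))_{2,\ldots,2}$ must be proportional to $\xi_1^k$ and $\xi_2^k$ respectively, and since these are polynomials in $\xi$, the operator $\B$ must be of order at least $k$.

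For (b), the strategy is pure duality. By Proposition \ref{prop:potentials}, $\B$ is a potential of $\A$ if and only if $\A^{\ast}$ is a potential of $\B^{\ast}$. Let $\A_0$ denote the first-order operator constructed in (a), and set $\B := \A_0^{\ast}$; this $\B$ is again first order. If $\A$ is any annihilator of $\B$, then $\A^{\ast}$ is a potential of $\B^{\ast} = \A_0$, so part (a) yields $k_{\A^{\ast}} \geq k$. Since the order of $\A^{\ast}$ equals the order of $\A$, we conclude $k_{\A} \geq k$.

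The main obstacle is adapting Lemma \ref{lem:ho} from $N=2$ to general $N>2$, but this is minor: its proof uses only one base frequency $\xi_0$, an open neighborhood of it, and two coordinate directions, all of which transfer verbatim. Verifying the kernel computation for the compatibility operator is likewise routine, and no further structural input is needed.
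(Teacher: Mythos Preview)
Your proposal is correct and follows essentially the same route as the paper: for (a) you use the first-order annihilator of $\nabla^k$ (the paper simply cites Meyers rather than writing it down) together with the argument of Lemma \ref{lem:ho}, and for (b) you invoke the adjoint duality of Proposition \ref{prop:potentials} exactly as the paper does. Your remark that Lemma \ref{lem:ho} is stated only for $N=2$ but transfers verbatim to $N>2$ is a point the paper leaves implicit, so your write-up is in fact slightly more careful.
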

Note that \ref{coro:1} follows directly from Lemma \ref{lem:ho} and the result by \textsc{Meyers}, that $\nabla^k$ admits a first-order annihilator $\A^k$ \cite{Meyers}. \ref{coro:2} then follows from Proposition \ref{prop:potentials} \ref{prop:adjoint}. In particular,  $\B = (\A^k)^{\ast}$ is of first order and only admits annihilators of order $\geq k$.
\section{Examples for $\A$-quasiaffine functions} \label{sec:examples}

In this section we discuss some examples and results for $\A$-quasiaffine (or $\B$-potential-quasiaffine functions) for explicit $\A$.

\subsection{$\B=\nabla$ and $\B = \nabla^k$}
Consider the operator \[\B = \nabla \colon C^{\infty}(\R^N,\R^{m}) \longrightarrow C^{\infty}(\R^N,\R^{N \times n}),\] which is given by the coordinates \[
(\B u)_{ij} = \partial_i u_j.
\]
We have the following result in this special case (e.g. \cite{Mor66,Resh,Conti,Dac}). 
\begin{prop}
$M \colon \R^{N \times m} \to \R$ is a $\nabla$-potential-quasiaffine function if and only if it is a linear combination of $r \times r$ minors ($1 \leq r \leq \min\{ m , N\}$).
\end{prop}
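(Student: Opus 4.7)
The plan is to reduce via Theorem \ref{thm:B} to characterising the $\Lambda_{\A}$-affine polynomials on $\R^{N \times m}$. Since $\B = \nabla$ is of first order with symbol $\B[\xi](w) = \xi \otimes w$, the cone $\Lambda_{\A}$ is exactly the rank-one cone $\{\xi \otimes w : \xi \in \R^N,\ w \in \R^m\}$, and $\nabla$-potential-quasiaffinity coincides with rank-one affinity. The target is thus that rank-one affine functions are precisely linear combinations of $r \times r$ minors with $0 \le r \le \min\{N, m\}$.

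For the sufficiency direction I would verify that each $r \times r$ minor is rank-one affine. Writing $M_{I,J}(A) := \det(A_{I,J})$ for increasing index sets $I \subset \{1, \dots, N\}$, $J \subset \{1, \dots, m\}$ of cardinality $r$, the perturbation $A \mapsto A + t\,\xi \otimes w$ replaces $A_{I,J}$ by $A_{I,J} + t\,\xi_I \otimes w_J$, and the matrix determinant lemma shows $\det(A_{I,J} + t\,\xi_I \otimes w_J)$ is affine in $t$. Hence each minor is $\Lambda_{\A}$-affine, so $\nabla$-potential-quasiaffine by Theorem \ref{thm:B}, and the same holds for arbitrary linear combinations.

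For the necessity direction, assume $M$ is $\nabla$-potential-quasiaffine. By Theorem \ref{thm:A} together with parts \ref{prop:poly4} and \ref{prop:poly6} of Theorem \ref{prop:polynomial}, it suffices to treat the case of a homogeneous $M$ of degree $r \ge 2$; then $T := \tfrac{1}{r!} D^r M(0)$ is a symmetric $r$-linear form on $\R^{N \times m}$ with $M(A) = T(A, \dots, A)$. The differential criterion of Theorem \ref{thm:A}\,\ref{thmA:2}, read through the potential $\B = \nabla$, yields $T(\xi_1 \otimes w_1, \dots, \xi_r \otimes w_r) = 0$ whenever the $\xi_i \in \R^N$ are linearly dependent; applying the same argument to $A \mapsto M(A^T)$ gives the analogous vanishing when the $w_i \in \R^m$ are linearly dependent. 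The decisive step is to upgrade these vanishing conditions to antisymmetry: fixing indices $i < j$ and expanding $0 = T((\xi_i + \xi_j) \otimes w_i, (\xi_i + \xi_j) \otimes w_j, \dots)$ by multilinearity, the two diagonal terms vanish by the linear-dependence condition, leaving $T(\xi_j \otimes w_i, \xi_i \otimes w_j, \dots) = -T(\xi_i \otimes w_i, \xi_j \otimes w_j, \dots)$; combined with the permutation symmetry of $T$, this yields antisymmetry of $T$ in both the $\xi$-slots and the $w$-slots on pure tensors. Consequently $T$ factors through $\wedge^r \R^N \otimes \wedge^r \R^m$: for $r > \min\{N, m\}$ this forces $T \equiv 0$ and hence $M \equiv 0$, while for $r \le \min\{N, m\}$ expanding $T$ in a basis of antisymmetric tensors and re-expressing $M(A) = T(A, \dots, A)$ in matrix-entry coordinates produces an explicit linear combination of $r \times r$ minors.

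The main obstacle I anticipate is cleanly extracting the antisymmetry from the vanishing conditions: the interplay between the permutation symmetry of $T$, the factorised arguments $\xi \otimes w$, and the distinct linear-dependence conditions in $\R^N$ and $\R^m$ requires careful multilinear bookkeeping. Once antisymmetry is in hand, identifying $T$ with an element of $(\wedge^r \R^N)^* \otimes (\wedge^r \R^m)^*$ and computing $T(A, \dots, A)$ as a combination of minors is standard exterior algebra, and the linear independence of the $r \times r$ minors gives uniqueness of the representation.
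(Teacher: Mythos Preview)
The paper does not supply its own proof of this proposition; it is quoted as a classical result with references to \textsc{Morrey}, \textsc{Reshetnyak}, \textsc{Conti} and \textsc{Dacorogna}. Your argument is correct and has the pleasant feature of deriving the classical characterisation directly from the machinery developed in the paper: Theorem~\ref{thm:B} reduces $\nabla$-potential-quasiaffinity to rank-one affinity, and the differential criterion \ref{thmA:2} of Theorem~\ref{thm:A} then does the algebraic work of identifying the homogeneous pieces with minors.

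One small simplification: the transpose step is unnecessary. From the vanishing of $T(\xi_1\otimes w_1,\dots,\xi_r\otimes w_r)$ whenever two of the $\xi_i$ coincide you already obtain antisymmetry in the $\xi$-variables by the polarisation you describe, and then the \emph{symmetry} of $T$ in its full arguments converts $\xi$-antisymmetry into $w$-antisymmetry:
\[
T(\xi_1\otimes w_1,\xi_2\otimes w_2,\dots)=T(\xi_2\otimes w_2,\xi_1\otimes w_1,\dots)=-T(\xi_1\otimes w_2,\xi_2\otimes w_1,\dots).
\]
Thus only the condition from Theorem~\ref{thm:A}\,\ref{thmA:2} is needed, not its analogue for $A\mapsto M(A^{T})$. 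The remaining identification of a form on $(\wedge^{r}\R^{N})\otimes(\wedge^{r}\R^{m})$ with a linear combination of $r\times r$ minors is indeed standard exterior algebra, and the degree bound $r\le\min\{N,m\}$ drops out automatically.
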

As $\curl$ is the annihilator of $\nabla$, we therefore have a basis of $\curl$-quasiaffine functions.
For higher-order gradients ($\B = \nabla^k$), a characterisation is given in \cite{BCO}. Namely,  a basis of $\nabla^k$-potential-quasiaffine functions are already $\nabla$-potential quasiaffine functions for the gradient acting on $C^{\infty}(\R^N,\R^N \otimes_{sym} ...\otimes_{sym} \R^N) $. 

\subsection{The divergence operators on matrices}
Consider the divergence operator acting on matrices, i.e. \[\divergence \colon C^{\infty}(\R^N,\R^{N \times d}) \longmapsto C^{\infty}(\R^N,\R^d)\] defined by \[
(\divergence u)_i = \sum_{j=1}^N \partial_j u_{ij}.
\]
As, after a suitable rotation, the differential operator $\divergence$ equals $\curl$ for dimension $N=2$, we just consider dimension $N\geq 3$. Note that the characteristic cone $\Lambda_{\divergence}$ is the space of matrices with rank $\geq N-1$.

\begin{prop} \label{prop:rank2}
Let $N>2$ and $\Lambda_2 \subset \R^N \times \R^d$ be the set of $\R^{N \times d}$ matrices with rank less or equal to $2$. Then $f$ is $\Lambda_2$-affine if and only if $f$ is affine. Moreover, all $\divergence$-quasiaffine functions are already affine.
\end{prop}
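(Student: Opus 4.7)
The plan is to first establish the characterisation of $\Lambda_2$-affinity and then derive the statement on $\divergence$-quasiaffinity by comparison of cones. I would begin by invoking Theorem \ref{prop:polynomial} \ref{prop:poly4} and \ref{prop:poly25}: any $\Lambda_2$-affine $f$ is a polynomial, hence $C^\infty$, and $\Lambda_2$-affinity is equivalent to
\[
D^2 f(x)[v,v] = 0 \qquad \forall\, x \in \R^{N \times d},\ \forall\, v \in \Lambda_2.
\]
In particular, specialising to rank-$1$ matrices $v = a \otimes b$ yields $D^2 f(x)[a \otimes b, a \otimes b] = 0$ for all $a \in \R^N$, $b \in \R^d$.

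The cornerstone of the argument is that $\Lambda_2$ is closed under summing two rank-$1$ matrices: for any $a, c \in \R^N$ and $b, d \in \R^d$, the sum $a \otimes b + c \otimes d$ has rank at most $2$, so it lies in $\Lambda_2$. Applying $\Lambda_2$-affinity to this element, expanding by bilinearity and symmetry of $D^2 f(x)$, and cancelling the two pure rank-$1$ diagonal terms (which vanish individually by the previous step) gives
\[
D^2 f(x)[a \otimes b,\, c \otimes d] = 0 \qquad \forall\, a, c \in \R^N,\ \forall\, b, d \in \R^d.
\]
Since the elementary rank-$1$ matrices $\{e_i \otimes e_j\}$ already form a basis of $\R^{N \times d}$, bilinearity of $D^2 f(x)$ then forces $D^2 f(x) \equiv 0$ for every $x$, whence $f$ is affine.

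For the second assertion, the characteristic cone of $\divergence$ is, by direct computation of $\divergence[\xi]$, the set of matrices whose columns all lie in the hyperplane $\xi^{\perp}$; taking the union over $\xi \in \R^N \setminus \{0\}$ produces exactly the matrices of rank $\leq N-1$. For $N > 2$ this cone contains $\Lambda_2$, so any $\divergence$-quasiaffine $f$ is $\Lambda_{\divergence}$-affine by Theorem \ref{prop:polynomial} \ref{prop:poly2}, hence $\Lambda_2$-affine, hence affine by the first part.

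The only delicate point is the polarisation step, which genuinely requires access to \emph{rank-$\leq 2$} test directions and not merely rank-$1$. As the $\curl$-quasiaffine minors of order $\geq 2$ demonstrate, $\Lambda_1$-affinity alone leaves room for non-trivial quadratic null Lagrangians; it is precisely the upgrade from rank~$1$ to rank $\leq 2$ that is strong enough to kill $D^2 f(x)$ on all pairs of basis matrices simultaneously, which is what makes the conclusion collapse to affinity.
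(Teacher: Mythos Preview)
Your proof is correct and follows essentially the same route as the paper: both arguments hinge on the fact that a sum of two rank-one matrices lies in $\Lambda_2$, which the paper uses to kill the monomial coefficients $A_{ij}A_{kl}$ and you use, equivalently, to polarise and obtain $D^2 f(x)[a\otimes b,\,c\otimes d]=0$. The derivation of the $\divergence$-quasiaffine statement from the inclusion $\Lambda_2 \subset \Lambda_{\divergence}$ is likewise the same as in the paper.
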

\begin{proof}
Let $e_{ij}$ be the standard basis of $\R^{N \times d}$ matrices.
We only prove that $\Lambda_2$-affine functions $M$ are already affine, the second follows directly from the observation, that these are contained in the characteristic cone of the divergence.
To this end, note that $\Lambda_2$-affinity of $M$ implies that $M$ is a polynomial and is the sum of some monomials $P$. In particular, if a matrix $A$ is decomposed as $A= \sum_{i,j} A_{ij} e_{ij}$ for $A_{ij} \in \R$, $M(A)$ is a polynomial in $A_{ij}$.
Consider some matrix $B \in \R^{N \times d}$ and the directions \[ \lambda e_{kl} + \mu e_{ij}, \quad 1 \leq i,k \leq N, \quad 1 \leq l,j \leq d \]
for $\lambda, \mu \in \R$. The  map 
\[ 
(\lambda, \mu) \mapsto  M(B + (\lambda e_{kl} + \mu e_{ij})) \]
is affine. Hence, all coefficients containing of monomials $P$, such that $A_{ij}^2$ or $(A_{ij} A_{kl})$ are factors of $P$, vanish. Consequently, all coefficients of monomials with degree larger than one vanish. Therefore, $M$ is already affine.
\end{proof}

\subsection{The div-curl Lemma and similar operators} \label{sec:divcurl}
Consider a constant rank operator $\A_1 \colon C^{\infty}(\R^N,\R^d) \to C^{\infty}(\R^N,\R^l)$ and a potential $\B_1  \colon C^{\infty}(\R^N,\R^m) \to C^{\infty}(\R^N,\R^d)$. Then we may consider the operator $\A:=(\A_1,\B_1^{\ast}) \colon C^{\infty}(\R^N,\R^d \times \R^d) \to C^{\infty}(\R^N,\R^l \times \R^m)$ defined by \[
(\A_1,\B_1^{\ast}) (u,v) = (\A_1 u , \B_1^{\ast} v).
\]
Note that we have \[
(\ker \A_1 [\xi])^{\perp} = \ker \B_1^{\ast} [\xi] \quad \forall \xi \in \R^N \back \{0\}.
\]
Therefore, we obtain the following result.
\begin{prop}
The map $f \colon \R^d \times \R^d \to \R$ defined by \[
f(a,b) = a \cdot b \]
is an $(\A_1,\B_1^{\ast})$-quasiaffine map.
\end{prop}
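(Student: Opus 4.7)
The plan is to give a direct Plancherel-type argument, although one could also invoke Theorem \ref{prop:polynomial} \ref{prop:poly3} since $f(a,b)=a\cdot b$ is a polynomial of degree two. I first observe the orthogonality built into the definition of $\A = (\A_1,\B_1^{\ast})$. Since $\B_1$ is a potential of $\A_1$, for every $\xi \in \R^N \back \{0\}$ we have $\ker \A_1[\xi] = \Image \B_1[\xi]$ and $\ker \B_1^{\ast}[\xi] = (\Image \B_1[\xi])^{\perp}$. Hence
\[
\ker \A[\xi] = \ker \A_1[\xi] \times \ker \B_1^{\ast}[\xi] = \Image \B_1[\xi] \times (\Image \B_1[\xi])^{\perp},
\]
so any $(a,b) \in \Lambda_{\A}$ satisfies $a \cdot b = 0$.

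Next I verify quasiaffinity directly from the definition. Let $\psi = (u,v) \in C^{\infty}_{\av}(T_N, \R^d \times \R^d)$ with $\A_1 u = 0$ and $\B_1^{\ast} v = 0$, and let $(a_0,b_0) \in \R^d \times \R^d$. Expanding,
\[
f((a_0,b_0) + (u(y),v(y))) = a_0 \cdot b_0 + a_0 \cdot v(y) + u(y) \cdot b_0 + u(y) \cdot v(y),
\]
and since $\int_{T_N} u = \int_{T_N} v = 0$, integrating over $T_N$ reduces the claim
$\int_{T_N} f((a_0,b_0)+\psi(y)) \dy = f(a_0,b_0)$
to showing $\int_{T_N} u(y) \cdot v(y) \dy = 0$.

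To obtain this orthogonality I apply Plancherel on the torus: writing $u(y) = \sum_{\xi \in \Z^N} \hat{u}(\xi) e^{2\pi i \xi \cdot y}$ and similarly for $v$, one has
\[
\int_{T_N} u(y) \cdot v(y) \dy = \sum_{\xi \in \Z^N} \hat{u}(\xi) \cdot \overline{\hat{v}(\xi)}.
\]
The hypothesis $\A_1 u = 0$ forces $\hat{u}(\xi) \in \ker \A_1[\xi] = \Image \B_1[\xi]$, while $\B_1^{\ast} v = 0$ forces $\hat{v}(\xi) \in \ker \B_1^{\ast}[\xi] = (\Image \B_1[\xi])^{\perp}$ (the same holds after conjugation, since $\B_1[-\xi]$ and $\B_1[\xi]$ differ by the scalar $(-1)^{k_{\B_1}}$). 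For $\xi = 0$ both modes vanish by the zero-mean assumption. Hence every term in the Plancherel sum is zero, which gives $\int_{T_N} u \cdot v \dy = 0$ and completes the proof.

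The only subtle point is keeping track of the orthogonality under complex conjugation in the Fourier series, which is handled by the homogeneity $\B_1[-\xi] = (-1)^{k_{\B_1}} \B_1[\xi]$. Everything else reduces to the defining algebraic identity $(\ker \A_1[\xi])^{\perp} = \ker \B_1^{\ast}[\xi]$, which is exactly the content of Proposition \ref{prop:potentials} \ref{prop:adjoint}.
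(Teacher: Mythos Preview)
Your proof is correct and follows essentially the same route as the paper. The paper does not spell out a proof of this proposition: it simply records the orthogonality $(\ker \A_1[\xi])^{\perp} = \ker \B_1^{\ast}[\xi]$ and leaves the rest to Theorem \ref{prop:polynomial} \ref{prop:poly3}, whose proof for quadratic forms is precisely the Plancherel computation you wrote out in full; so your argument is just an explicit unfolding of the paper's implicit one.
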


Note that this result has a lot of implications for weak convergence in the context of compensated compactness (e.g. \cite{Murat2,Murat,Compcomp,DiPerna,Rindler,RG}). In particular, if $u_n, v_n \in L^2(\Omega,\R^d)$ with $\A_1 u_n=0$ and $\B_1^{\ast} v_n=0$, then due to the Characterisation Theorem \ref{thm:A} \ref{thmA:5} we find that \[
u_n \weakto u, ~ v_n \weakto v\quad \Longrightarrow \quad u_n \cdot v_n \weakto u \cdot v \text{ in the sense of distributions.}
\]
The two most prominent examples are the following. On the one hand, the operators \[
\A_1 = \curl, \quad \B_1^{\ast} = \divergence,
\]
both acting on $\R^{N \times d}$-matrices are a well-known example, which is the initial example of compensated compactness. Another example of this type is \[
\A_1 = \curl \curl^T, \quad \B_1^{\ast}=\divergence\]
on symmetric $N \times N$ matrices, considered in the context of linear elasticity (e.g. \cite{CMO}.)

\textbf{Acknowledgements:} The author would like to thank Stefan M\"uller for some helpful advice.  The author has been supported by the Deutsche Forschungsgemeinschaft (DFG, German Research Foundation) through the graduate school BIGS of the Hausdorff Center for Mathematics (GZ EXC 59 and 2047/1, Projekt-ID 390685813).

\bibliography{Aliterature} 

\begin{thebibliography}{CDKS06}

\bibitem[AD92]{Alibert}
J.~Alibert and B.~Dacorogna.
\newblock An example of a quasiconvex function not polyconvex in dimension two.
\newblock {\em Arch. Rat. Mech. Anal.}, 117:155--166, 1992.

\bibitem[ARS21]{Adolfo}
A.~Arroyo~Rabasa and J.~Simental.
\newblock An elementary proof of the homological properties of constant-rank
  operators.
\newblock {\em https://arxiv.org/abs/2107.05098}, 2021.

\bibitem[Bal77]{Ball77}
J.~Ball.
\newblock Convexity conditions and existence theorems in non-linear elasticity.
\newblock {\em Arch. Rat. Mech. Anal.}, 63:337--403, 1977.

\bibitem[BCO81]{BCO}
J.~Ball, J.~Currie, and P.~Olver.
\newblock {Null Lagrangians, Weak Continuity, and Variational Problems of
  Arbitrary Order}.
\newblock {\em J. Func. Anal.}, 41:135--174, 1981.

\bibitem[CDKS06]{Conti}
S.~Conti, G.~Dolzmann, B.~Kirchheim, and M\"uller S.
\newblock {Sufficient conditions for the validity of the Cauchy-Born rule close
  to $\rm SO(n)$.}
\newblock {\em Journal of the European Mathematical Society}, 8:515--530, 2006.

\bibitem[CMO18]{CMO}
S.~Conti, S.~M\"uller, and M.~Ortiz.
\newblock Data-driven problems in elasticity.
\newblock {\em Arch. Rat. Mech. Anal.}, 229:79--123, 2018.

\bibitem[Dac08]{Dac}
B.~Dacorogna.
\newblock {\em {Direct Methods in the Calculus of Variations}}.
\newblock Springer-Verlag New York, 2 edition, 2008.

\bibitem[DP85]{DiPerna}
R.~Di~Perna.
\newblock {Compensated compactness and general systems of conservation laws}.
\newblock {\em Trans. Amer. Math. Soc.}, 292:383--420, 1985.

\bibitem[FM99]{FM}
I.~Fonseca and S.~M\"uller.
\newblock {A-quasiconvexity, lower-semicontinuity and Young measures}.
\newblock {\em SIAM J. Math. Anal.}, 30(6):1355--1390, 1999.

\bibitem[GR19]{RG}
A.~Guerra and B.~Rai\c{t}\u{a}.
\newblock {Quasiconvexity, null Lagrangians, and Hardy space integrability
  under constant rank constraints}.
\newblock {\em https://arxiv.org/abs/1909.03923}, 2019.

\bibitem[Kri99]{Kristensen}
J.~Kristensen.
\newblock On the non-locality of quasiconvexity.
\newblock {\em nnales de l'Institut Henri Poincar\'{e}}, 6:1--13, 1999.

\bibitem[Mey65]{Meyers}
N.~Meyers.
\newblock Quasiconvexity and the lower semicontinuity of multiple variational
  integrals of any order.
\newblock {\em Transactions of the American Mathematical Society},
  119(1):125--149, 1965.

\bibitem[Mor66]{Mor66}
C.~Morrey.
\newblock {\em Multiple Integrals in Calculus of Variations}.
\newblock Springer, 1966.

\bibitem[M{\"u}l99]{Mlecture}
S.~M{\"u}ller.
\newblock Variational models for microstructure and phase transitions.
\newblock In {\em Calculus of {Variations} and {Geometric} {Evolution}
  {Problems}: {Lectures} given at the 2nd {Session} of the {Centro}
  {Internazionale} {Matematico} {Estivo} ({C}.{I}.{M}.{E}.) held in {Cetraro},
  {Italy}, {June} 15-22, 1996}, Lecture {Notes} in {Mathematics}, pages
  85--210. Springer, Berlin, Heidelberg, 1999.

\bibitem[Mur78]{Murat2}
F.~Murat.
\newblock {Compacit\'e par compensation}.
\newblock {\em Ann. Sc. Norm. Super. Pisa, Cl. Sci., IV. Ser}, 5:489--507,
  1978.

\bibitem[Mur81]{Murat}
F.~Murat.
\newblock {Compacit\'e par compensation: condition necessaire et suffisante de
  continuit\'e faible sous une hypoth\'ese de rang constant}.
\newblock {\em Ann. Sc. Norm. Sup. Pisa}, 8:68--102, 1981.

\bibitem[Rai19]{Raita}
B.~Rai\c{t}\u{a}.
\newblock {Potentials for A-quasiconvexity}.
\newblock {\em Calc. Var.}, 58:105, 2019.

\bibitem[Res67]{Resh}
Y.~Reshetnyak.
\newblock On the stability of conformal mappings in multidimensional spaces.
\newblock {\em Sib. Math. J.}, 8:69--85, 1967.

\bibitem[Rin14]{Rindler}
F.~Rindler.
\newblock {Directional oscillations, concentrations, and compensated
  compactness via microlocal compactness forms}.
\newblock {\em Arch. Ration. Mech. Anal.}, 215:1--63, 2014.

\bibitem[Tar79]{Compcomp}
L.~Tartar.
\newblock {Compensated compactness and applications to partial differential
  equations}.
\newblock In {\em {Nonlinear Analysis and Mechanics: Heriot-Watt Symposium}},
  volume~4, pages 136--212. Pitman Res. Notes Math, 1979.

\bibitem[\v{S}92]{Sverak}
V.~\v{S}ver\'{a}k.
\newblock Rank-one convexity does not imply quasiconvexity.
\newblock {\em Proc. Roy. Soc. Edinburgh}, 120:185--189, 1992.

\end{thebibliography}
\bibliographystyle{alpha}
\end{document}